\newtheorem{thm}{Theorem}[section]
\newtheorem{cor}[thm]{Corollary}
\newtheorem{lem}[thm]{Lemma}
\newtheorem{prop}[thm]{Proposition}
\theoremstyle{definition}
\newtheorem{example}[thm]{Example}
\newtheorem{remark}[thm]{Remark}
\newcommand{\K}{{\mathbb{K}}}
\newcommand{\R}{{\mathbb{R}}}
\newcommand{\Z}{{\mathbb{Z}}}
\newcommand{\Q}{{\mathbb{Q}}}
\newcommand{\PG}{{\mathrm{PG}}}
\newcommand{\GL}{{\mathrm{GL}}}
\newcommand{\PP}{{\mathbb{P}}}
\newcommand{\Gr}{{\mathrm{Gr}}}
\newcommand{\Dr}{{\mathrm{Dr}}}
\newcommand{\TA}{{\mathbb{T}}}
\newcommand{\TP}{{\mathbb{TP}}}
\providecommand\cF{{\mathscr{F}}}
\providecommand\cM{{\mathscr{M}}}
\providecommand\cX{{\mathscr{X}}}
\providecommand\cddlib{\texttt{cddlib}\xspace}
\providecommand\SoPlex{\texttt{SoPlex}\xspace}
\providecommand\Macaulay{\texttt{Macaulay2}\xspace}
\providecommand\Gfan{\texttt{Gfan}\xspace}
\providecommand\polymake{\texttt{polymake}\xspace}
\providecommand\homology{\texttt{homology}\xspace}
\providecommand{\Sym}{\operatorname{S}}
\providecommand{\MatroidPolytope}[1]{P_{#1}}
\providecommand{\SetOf}[2]{\left\{#1\vphantom{#2}\,\right.\left|\,\vphantom{#1}#2\right\}}
\providecommand{\smallSetOf}[2]{\{#1\,|\,#2\}}
\providecommand{\GF}{{\mathbb{GF}}}
\DeclareMathOperator{\conv}{conv}
\DeclareMathOperator{\rank}{rank}
\DeclareMathOperator{\initial}{in}
\providecommand{\tropical}[1]{{\mathbb{T}}(#1)}
\providecommand\cprime{$'$}
\providecommand{\bysame}{\leavevmode\hbox to3em{\hrulefill}\thinspace}
\title{How to Draw Tropical Planes}
\author[Herrmann, Jensen, Joswig, and Sturmfels]{Sven Herrmann \and 
Anders Jensen \and \\ Michael Joswig \and Bernd Sturmfels}
\address{Sven Herrmann, FB Mathematik, TU Darmstadt, 64289 Darmstadt,
  Germany} \email{sherrmann@mathematik.tu-darmstadt.de}
\address{Anders Jensen, Courant Research Center, Mathematisches
  Institut, Georg-August-Universit\"at, 37073 G\"ottingen, Germany}
\email{jensen@uni-math.gwdg.de}
\address{Michael Joswig, FB Mathematik, TU Darmstadt, 64289 Darmstadt,
  Germany} \email{joswig@mathematik.tu-darmstadt.de}
\address{Bernd Sturmfels, Department of Mathematics, UC Berkeley,
  Berkeley CA 94720, USA} \email{bernd@math.berkeley.edu}
\thanks{Sven Herrmann was supported by a Graduate Grant of TU
  Darmstadt.  Anders Jensen was supported by a Sofia Kovalevskaja
  prize awarded to Olga Holtz at TU Berlin.  Michael Joswig was
  supported by the DFG Research Unit ``Polyhedral Surfaces''.  Bernd
  Sturmfels was supported by an Alexander-von-Humboldt senior award at
  TU Berlin and the US National Science Foundation.}
\dedicatory{Dedicated to Anders Bj\"orner on the occasion of his 60th birthday.}
\begin{document}

\bibliographystyle{plain}

\maketitle

\begin{abstract}
  The tropical Grassmannian parameterizes tropicalizations of ordinary
  linear spaces, while the Dressian parameterizes all tropical linear
  spaces in $\TP^{n-1}$.  We study these parameter spaces and we
  compute them explicitly for $n \leq 7$.  Planes are identified with
  matroid subdivisions and with arrangements of trees.  These
  representations are then used to draw pictures.
\end{abstract}

\section{Introduction}

A line in tropical projective space $\TP^{n-1}$ is an embedded metric tree
which is balanced and has $n$ unbounded edges pointing into the coordinate
directions. The parameter space of these objects is the tropical Grassmannian
$\Gr(2,n)$. This is a simplicial fan \cite{SS}, known to evolutionary
biologists as the \emph{space of phylogenetic trees} with $n$ labeled leaves
\cite[\S 3.5]{ASCB}, and known to algebraic geometers as the \emph{moduli
  space of rational tropical curves} \cite{Mik}.

Speyer \cite{Spe1, Spe2} introduced higher-dimensional tropical linear
spaces. They are contractible polyhedral complexes all of whose maximal cells
have the same dimension $d-1$.  Among these are the realizable tropical linear
spaces which arise from $(d-1)$-planes in classical projective space
$\PP^{n-1}_\K$ over a field $\K$ with a non-archimedean valuation.  Realizable
linear spaces are parameterized by the tropical Grassmannian $\Gr(d,n)$, as
shown in \cite{SS}.  Note that, as a consequence of \cite[Theorem 3.4]{SS} and
\cite[Proposition 2.2]{Spe1}, all tropical lines $(d=2)$ are realizable.
Tropical Grassmannians represent compact moduli spaces of hyperplane
arrangements.  Introduced by Alexeev, Hacking, Keel, and Tevelev \cite{Ale,
  HKT, KT}, these objects are natural generalizations of the moduli space
$\overline{M}_{0,n}$.

In this paper we focus on the case $d=3$.  By a \emph{tropical plane} we mean
a two-dimensional tropical linear subspace of $\TP^{n-1}$. It was shown in
\cite[\S 5]{SS} that all tropical planes are realizable when $n \leq 6$. This
result rests on the classification of planes in $\TP^5$ which is shown in
Figure \ref{fig:planes}.  We here derive the analogous complete picture of
what is possible for $n=7$.  In Theorem \ref{thm:dim_bound}, we show that for
larger $n$ most tropical planes are not realizable.  More precisely, the
dimension of $\Dr(3,n)$ grows quadratically with $n$, while the dimension of
$\Gr(3,n)$ is only linear in $n$.

Tropical linear spaces are represented by vectors of Pl\"ucker
coordinates. The axioms characterizing such vectors were discovered two
decades ago by Andreas Dress who called them \emph{valuated matroids}.  We
therefore propose the name \emph{Dressian} for the tropical prevariety
$\Dr(d,n)$ which parameterizes $(d-1)$-dimensional tropical linear spaces in
$\TP^{n-1}$.  The purpose of this paper is to gather results about $\Dr(3,n)$
which may be used in the future to derive general structural information about
all Dressians and Grassmannians.
\begin{figure}[hp]
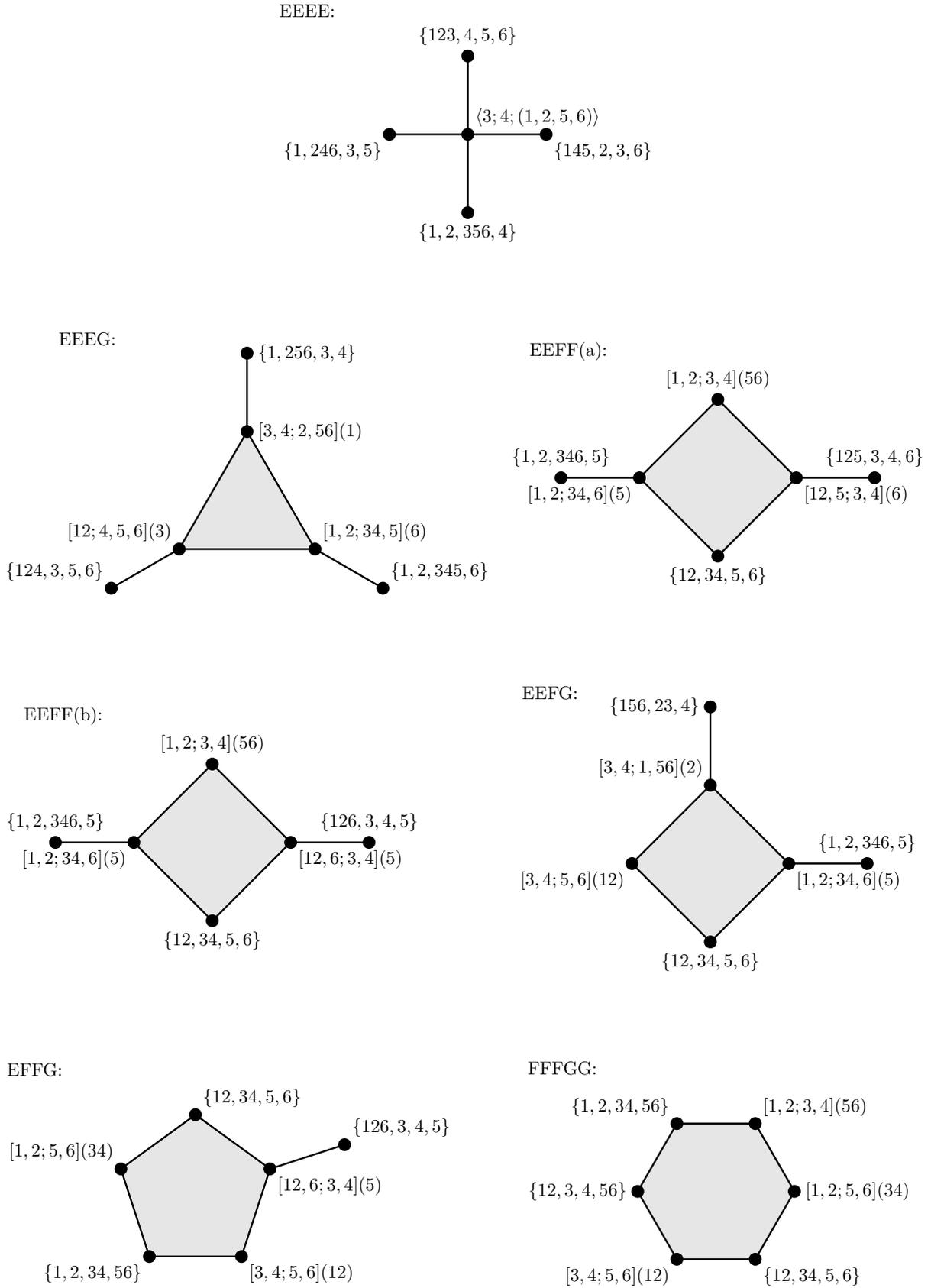
\centering

  \includegraphics[scale=0.91]{Planes.7}

  \vspace{1.5cm}

  \begin{minipage}[c]{.45\textwidth}\centering
    \includegraphics[scale=0.91]{Planes.6}
  \end{minipage}
  \hfill
  \begin{minipage}[c]{.45\textwidth}\centering
    \includegraphics[scale=0.91]{Planes.3}
  \end{minipage}

  \vspace{1.5cm}
  
  \begin{minipage}[c]{.45\textwidth}\centering
    \includegraphics[scale=0.91]{Planes.4}
  \end{minipage}
  \hfill
  \begin{minipage}[c]{.45\textwidth}\centering
    \includegraphics[scale=0.91]{Planes.5}
  \end{minipage}

  \vspace{1.5cm}

  \begin{minipage}[c]{.45\textwidth}\centering
    \includegraphics[scale=0.91]{Planes.2}
  \end{minipage}
  \hfill
  \begin{minipage}[c]{.45\textwidth}\centering
    \includegraphics[scale=0.91]{Planes.1}
  \end{minipage}
  
  \caption{The seven types of generic tropical planes in $\TP^5$.}
  \label{fig:planes}
\end{figure}

The paper is organized as follows. In Section~\ref{sec:computation} we review
the formal definition of the Dressian and the Grassmannian, and we present our
results on $\Gr(3,7)$ and $\Dr(3,7)$.  These also demonstrate the remarkable
scope of current software for tropical geometry.  In particular, we use
\Gfan~\cite{Gfan} for computing tropical varieties and
\polymake~\cite{polymake} for computations in polyhedral geometry.

Tropical planes are dual to regular matroid subdivisions of the hypersimplex
$\Delta(3,n)$. The theory of these subdivisions is developed in Section 3,
after a review of matroid basics, and this allows us to prove various
combinatorial results about the Dressian $\Dr(3,n)$.  With a specific
construction of matroid subdivisions of the hypersimplices which arise from
the set of lines in finite projective spaces over $\GF(2)$ these combinatorial
results yield the lower bound on the dimensions of the Dressians in
Theorem~\ref{thm:dim_bound}.

A main contribution is the bijection between tropical planes and arrangements
of metric trees in Theorem \ref{thm:arrangement}.  This bijection tropicalizes
the following classical picture.  Every plane $\PP_\K^{n-1}$ corresponds to an
arrangement of $n$ lines in $\PP_\K^2$, and hence to a rank-$3$-matroid on $n$
elements.  Lines are now replaced by trees, and arrangements of trees are used
to encode matroid subdivisions.  These can be non-regular, as shown in
Section~\ref{sec:trees}.  A key step in the proof of Theorem
\ref{thm:arrangement} is Proposition~\ref{prop:bar_sigma} which compares the
two natural fan structures on $\Dr(3,n)$, one arising from the structure as a
tropical prevariety, the other from the secondary fan of the hypersimplex
$\Delta(3,n)$.  It turns out that they coincide.  The
Section~\ref{sec:pictures} answers the question in the title of this paper,
and, in particular, it explains the seven diagrams in Figure \ref{fig:planes}
and their $94$ analogs for $n=7$.  In Section~\ref{sec:pappus} we extend the
notion of Grassmannians and Dressians from $\Delta(d,n)$ to arbitrary matroid
polytopes.

We are indebted to Francisco Santos, David Speyer, Walter Wenzel, Lauren
Williams, and an anonymous referee for their helpful comments.

\section{Computations}\label{sec:computation}

Let $I$ be a homogeneous ideal in the polynomial ring
$\K[x_1,\dots,x_t]$ over a field $\K$. Each vector $\lambda\in\R^t$
gives rise to a partial term order and thus defines an initial ideal
$\initial_\lambda(I)$, by choosing terms of lowest weight for each
polynomial in $I$.  The set of all initial ideals of $I$ induces a fan
structure on $\R^t$. This is the \emph{Gr\"obner fan} of $I$, which
can be computed using \Gfan~\cite{Gfan}.  The subfan induced by those
initial ideals which do not contain any monomial is the \emph{tropical
  variety} $\tropical{I}$.  If $I$ is a principal ideal then
$\tropical{I}$ is a \emph{tropical hypersurface}.  A \emph{tropical
  prevariety} is the intersection of finitely many tropical
hypersurfaces.  Each tropical variety is a tropical prevariety, but
the converse does not hold \cite[Lemma~3.7]{RGST}.

Consider a fixed $d\times n$-matrix of indeterminates. Then each
$d\times d$-minor is defined by selecting $d$ columns
$\{i_1,i_2,\dots,i_d\}$.  Denoting the corresponding minor
$p_{i_1\ldots i_d}$, the algebraic relations among all $ d\times
d$-minors define the \emph{Pl\"ucker ideal} $I_{d,n}$ in $\K[p_S]$,
where $S$ ranges over $\tbinom{[n]}{d}$, the set of all $d$-element
subsets of $[n]:=\{1,2,\dots,n\}$.  The ideal $I_{d,n}$ is a
homogeneous prime ideal.  The \emph{tropical Grassmannian} $\Gr(d,n)$
is the tropical variety of the Pl\"ucker ideal $I_{d,n}$.  Among the
generators of $I_{d,n}$ are the \emph{three term Pl\"ucker relations}
\begin{equation}\label{eq:3term}
  p_{Sij}p_{Skl} - p_{Sik}p_{Sjl} + p_{Sil}p_{Sjk} \, ,
\end{equation}
where $S\in\tbinom{[n]}{d-2}$ and $i,j,k,l\in [n]\backslash S$
pairwise distinct.  Here $Sij$ is shorthand notation for the set
$S\cup\{i,j\}$.  The relations (\ref{eq:3term}) do not generate the
Pl\"ucker ideal $I_{d,n}$ for $d \geq 3$, but they always suffice to
generate the image of $I_{d,n}$ in the Laurent polynomial ring
$\K[p_S^{\pm 1}]$.

The \emph{Dressian} $\Dr(d,n)$ is the tropical prevariety defined by
all three term Pl\"ucker relations.  The elements of $\Dr(d,n)$ are
the \emph{finite tropical Pl\"ucker vectors} of Speyer~\cite{Spe1}.  A
\emph{general tropical Pl\"ucker vector} is allowed to have $\infty$
as a coordinate, while a \emph{finite} one is not.  The three term
relations define a natural \emph{Pl\"ucker fan structure} on the
Dressian~$\Dr(d,n)$: two weight vectors $\lambda$ and $\lambda'$ are
in the same cone if they specify the same initial form for each
trinomial (\ref{eq:3term}).  In Sections 3 and 4 we shall derive an
alternative description of the Dressian $\Dr(d,n)$ and its Pl\"ucker
fan structure in terms of matroid subdivisions.

The Grassmannian and the Dressian were defined as fans in
$\R^{\binom{n}{d}}$.  One could also view them as subcomplexes in the
\emph{tropical projective space} $\TP^{\binom{n}{d}-1}$, which is the
compact space obtained by taking $\,(\R \cup
\{\infty\})^{\binom{n}{d}} \backslash \{(\infty, \ldots, \infty)\} \,$
modulo tropical scalar multiplication.  We adopt that interpretation
in Section \ref{sec:pappus}.  Until then, we stick to
$\R^{\binom{n}{d}}$.  Any polyhedral fan gives rise to an
\emph{underlying} (spherical) polytopal complex obtained by
intersecting with the corresponding unit sphere.  Moreover, the
Grassmannian $\Gr(d,n)$ and the Dressian $\Dr(d,n)$ have the same
$n$-dimensional lineality space which we can factor out.  This gives
pointed fans in $\R^{\binom{n}{d} - n}$.  For the underlying spherical
polytopal complexes of these pointed fans we again use the notation
$\Gr(d,n)$ and $\Dr(d,n)$.  The former has dimension $d(n-d)-n$, while
the latter is a generally higher-dimensional polyhedral complex whose
support contains the support of $\Gr(d,n)$.  For instance, $\Gr(2,5) =
\Dr(2,5)$ is the Petersen graph.  In the sequel we will discuss
topological features of $\Gr(d,n)$ and $\Dr(d,n)$.  In these cases we
always refer to the underlying polytopal complexes of these two fans
modulo their lineality spaces.  Each of the two fans is a cone over
the underlying polytopal complex (joined with the lineality space).
Hence the fans are topologically trivial, while the underlying
polytopal complexes are not.

It is clear from the definitions that the Dressian contains the
Grassmannian (over any field $\K$) as a subset of $\R^{\binom{n}{d}}$;
but it is far from obvious how the fan structures are related.  
Results of \cite{SS} imply that $\Gr(2,n) = \Dr(2,n)$ as fans and that
$\Gr(3,6) = \Dr(3,6)$ as sets.  Using computations with the software
systems \Gfan \cite{Gfan}, \homology \cite{homology}, \Macaulay
\cite{M2}, and \polymake \cite{polymake} we obtained the following
results about the next case $(d,n)=(3,7)$.

\begin{thm}
  \label{thm:Gr37}
  Fix any field $\K$ of characteristic different from 2.  The
  tropical Grassmannian $\Gr(3,7)$, with its induced Gr\"obner fan
  structure, is a simplicial fan with $f$-vector
  \[
  (721, 16800, 124180, 386155, 522585, 252000) \, .
  \]
  The homology of the underlying five-dimensional simplicial complex
  is free Abelian, and it is concentrated in top dimension:
  \[
  H_*\bigl( \Gr(3,7); \Z \bigr) \ = \ H_5\bigl( \Gr(3,7); \Z \bigr)
  \ = \ \Z^{7470} \, .
  \]
\end{thm}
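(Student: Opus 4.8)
The plan is to establish both assertions by explicit computation, exploiting the $S_7$-symmetry of the Pl\"ucker ideal $I_{3,7}$ throughout to keep everything feasible. First I would set up $I_{3,7}$ inside $\K[p_S]$, with $S$ ranging over $\tbinom{[7]}{3}$, over a field of characteristic different from $2$ (say $\Q$ or a large prime field); this hypothesis is essential, because $\tropical{I_{3,7}}$ depends on the characteristic through the realizability of the Fano plane $\PG(2,2)$, which occurs precisely in characteristic $2$. A preliminary \Macaulay computation confirms the (known) generating set and the primality of $I_{3,7}$.

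The core step is to determine the Gr\"obner fan structure of $\tropical{I_{3,7}} = \Gr(3,7)$ with \Gfan. The idea is to traverse the maximal Gr\"obner cones contained in the tropical variety---those whose initial ideal contains no monomial---by a connected walk through codimension-one faces, storing only one representative per $S_7$-orbit; this collapses the $252000$ top-dimensional cones to a few orbit types. From the orbit data I would reconstruct the cones in every dimension, verify that the fan is \emph{simplicial} (each $6$-dimensional cone is spanned by exactly six rays modulo the $7$-dimensional lineality space, so that the underlying $5$-dimensional complex is a genuine simplicial complex), and count faces orbit by orbit to assemble the $f$-vector $(721, 16800, 124180, 386155, 522585, 252000)$. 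As an independent check I would feed the rays and maximal cells to \polymake and recompute the face numbers there.

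For the homological statement I would regard the simplicial complex underlying the pointed fan---its intersection with the unit sphere---and build the integral simplicial chain complex from the incidence data already produced. Computing $H_*(\Gr(3,7);\Z)$ then reduces to Smith normal form reductions of the boundary matrices, which I would carry out with the \homology package; the output should be free homology concentrated in top degree, namely $\Z^{7470}$. A reassuring consistency check comes from the Euler characteristic: from the $f$-vector one gets $\chi = 721 - 16800 + 124180 - 386155 + 522585 - 252000 = -7469$, so the reduced Euler characteristic is $-7470$, matching $(-1)^5 \rank H_5(\Gr(3,7);\Z)$ exactly.

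The main obstacle is sheer scale. The \Gfan traversal must certify that the tropical variety is connected in codimension one while never materializing the full orbit of over half a million facets, and the integral homology computation requires Smith normal forms of boundary maps with hundreds of thousands of columns. Controlling the memory footprint and the explosion of intermediate integer entries---and, for the homology, ensuring that the absence of torsion is genuine rather than an artifact of a modular reduction---is the delicate part; cross-validating the final numbers across \Gfan, \polymake, and \homology is what ultimately gives confidence in the stated answer.
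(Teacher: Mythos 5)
Your computational strategy---a \Gfan traversal of the maximal Gr\"obner cones exploiting the $\Sym_7$-symmetry, extraction of the face data and $f$-vector (with a \polymake cross-check), and integral homology of the underlying spherical complex via Smith normal form with \homology, including the Euler characteristic consistency check---is essentially the approach taken in the paper. But there is one genuine gap: the theorem is asserted for \emph{every} field $\K$ of characteristic different from $2$, while your proposal runs the computation over a single field (``$\Q$ or a large prime field''). Whether a given initial ideal contains a monomial can depend on the characteristic, and your own remark about the Fano plane shows you know the answer genuinely changes in characteristic $2$. So a single run over $\Q$ proves the statement only in characteristic zero, and a run over some $\F_p$ only for that one prime; neither covers all primes $p \geq 3$ simultaneously, and nothing in your proposal bridges this.

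The paper closes exactly this gap with an additional step: after the characteristic-zero traversal, it redoes the Gr\"obner basis computations, one for each cone, over the integers in \Macaulay, and finds that all but one of the initial ideals arise via Gr\"obner bases whose coefficients are $+1$ and $-1$; such cones are therefore characteristic-free. The single exception is the Fano cone, arising via Corollary~\ref{cor:matroid} from the Fano matroid $\cF_3$, which must be treated separately by a matroid-theoretic argument: since $\cF_3$ is realizable only in characteristic $2$, for $\operatorname{char}\K \neq 2$ the intersection of the Fano cell of $\Dr(3,7)$ with $\Gr(3,7)$ is a five-dimensional simplicial sphere built from seven copies of the non-Fano matroid, and this accounts for the discrepancy between $\Gr(3,7)$ and $\Dr(3,7)$ (and for the rank $7470$ versus $7440$ in homology). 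Without some argument of this kind---verification over $\Z$ of the Gr\"obner bases cone by cone, plus the special analysis of the Fano cone---your proof establishes the $f$-vector and the homology only for the particular field you computed over, not the statement as claimed.
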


The result on the homology is consistent with Hacking's theorem in
\cite[Theorem~2.5]{Hac}.Indeed, Hacking showed that if the tropical
compactification is sch\"on then the homology of the tropical variety is
concentrated in top dimension, and it is conjectured in \cite[\S1.4]{KT} that
the property of being sch\"on holds for the Grassmannian when $d=3$ and $n=7$;
see also \cite[Example 4.2]{Hac}.  Inspired by Markwig and Yu \cite{MY}, we
conjecture that the simplicial complex $\Gr(3,7)$ is shellable.


\begin{thm}
  \label{thm:Dr37}
  The Dressian $\Dr(3,7)$, with its Pl\"ucker fan structure, is a
  non-simplicial fan. The underlying polyhedral complex is
  six-dimensional and has the $f$-vector
  \[
  (616, 13860, 101185, 315070, 431025, 211365, 30) \, .
  \]
  Its $5$-skeleton is triangulated by the Grassmannian $\Gr(3,7)$, and
  the homology is
  \[
  H_*\bigl( \Dr(3,7); \Z \bigr) \ = \ H_5\bigl( \Dr(3,7); \Z \bigr)
  \ = \ \Z^{7440} \, .
  \]
\end{thm}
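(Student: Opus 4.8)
The plan is to establish this result by explicit computation, paralleling Theorem~\ref{thm:Gr37} but working with the Plücker fan in place of the Gröbner fan. First I would set up the $105$ three term relations~\eqref{eq:3term} for $(d,n)=(3,7)$: here $S$ is a single element of $[7]$ and $\{i,j,k,l\}$ a four element subset of $[7]\setminus S$, so there are $7\cdot\binom{6}{4}=105$ trinomials in the $\binom{7}{3}=35$ unknowns $p_S$. Two weight vectors lie in the same Plücker cone precisely when each trinomial attains its minimum on the same subset of its three terms; intersecting the resulting tropical hyperplanes and factoring out the $7$ dimensional lineality space gives a pointed fan in $\R^{28}$. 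Using \polymake~\cite{polymake} to compute this common refinement and to enumerate its cones by dimension produces the asserted $f$-vector and exhibits maximal cones that are not simplicial: the $30$ cones of dimension $6$ (modulo lineality) have more than $6$ rays, which already proves that $\Dr(3,7)$ is non-simplicial.

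Next I would relate the Plücker fan to the Gröbner fan of Theorem~\ref{thm:Gr37}. Since every three term relation lies in the Plücker ideal $I_{3,7}$, the initial ideal determines the initial form of each trinomial, so each Gröbner cone refines a Plücker cone; consequently the support of $\Gr(3,7)$ is contained in that of $\Dr(3,7)$ via $\tropical{I_{3,7}}\subseteq\Dr(3,7)$, and the simplicial fan $\Gr(3,7)$ subdivides the part of $\Dr(3,7)$ that it meets. To prove that this part is exactly the $5$-skeleton I would verify, cone by cone, that the union of the simplicial cones of $\Gr(3,7)$ coincides with the union of the cones of $\Dr(3,7)$ of dimension at most $5$; the $105=721-616$ additional rays of $\Gr(3,7)$ then appear as vertices introduced in triangulating the non-simplicial low dimensional cells, while no ray lies in the relative interior of a $6$-cell. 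This comparison is the combinatorial heart of the argument and the step I expect to be the main obstacle, both because it requires matching hundreds of thousands of cells carried by two genuinely different fan structures and because one must certify that no $6$-dimensional cell of $\Dr(3,7)$ contributes points to the $5$-skeleton beyond the support of $\Gr(3,7)$.

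Finally I would compute the homology. Because $\Gr(3,7)$ triangulates the $5$-skeleton of $\Dr(3,7)$, the two spaces are homeomorphic through dimension $5$, so that skeleton carries the free homology $\Z^{7470}$ concentrated in top degree supplied by Theorem~\ref{thm:Gr37}. Passing to $\Dr(3,7)$ amounts to attaching the $30$ six dimensional cells along their boundary $5$-cycles, which in the cellular chain complex is the map $\partial_6\colon\Z^{30}\to C_5$; attaching these cells leaves $H_i$ unchanged for $i\leq 4$, so the lower homology stays zero. The statement is then equivalent to showing that $\partial_6$ is injective with image a rank $30$ saturated sublattice of the $5$-cycles: injectivity yields $H_6(\Dr(3,7);\Z)=\ker\partial_6=0$, while saturation yields $H_5(\Dr(3,7);\Z)=\Z^{7470}/\Z^{30}=\Z^{7440}$ with no torsion. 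I would confirm both conditions by feeding the regular cell complex of $\Dr(3,7)$ to \homology~\cite{homology} and \polymake~\cite{polymake}, exactly as in the proof of Theorem~\ref{thm:Gr37}, which simultaneously checks that all lower homology vanishes.
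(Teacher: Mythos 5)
Your overall strategy---compute the Pl\"ucker fan explicitly, compare it with the Gr\"obner fan of Theorem~\ref{thm:Gr37}, and then get the homology by attaching the thirty $6$-cells to the triangulated $5$-skeleton---is the right shape, and your homology step correctly mirrors what actually happens (each top cell kills one cycle, $7470-30=7440$). But there are two genuine problems. First, your proof of non-simpliciality is based on a false claim: the $30$ six-dimensional cells of $\Dr(3,7)$ are the \emph{Fano cells}, each spanned by exactly seven rays (the seven vertex splits of the Fano matroid $\cF_3$), and since a six-dimensional spherical cell corresponds to a seven-dimensional pointed cone, seven rays make these cells \emph{simplices}---the paper says so explicitly (``The Fano six-cells are simplices''). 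The non-simplicial cells of $\Dr(3,7)$ sit among the maximal \emph{five}-dimensional cells; this is precisely why $\Gr(3,7)$ needs $721-616=105$ additional rays to triangulate the $5$-skeleton, merging $252000$ simplices of $\Gr(3,7)$ into $211365$ cells of $\Dr(3,7)$. So your claimed witness for non-simpliciality does not exist, and the step ``which already proves that $\Dr(3,7)$ is non-simplicial'' fails as stated.

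Second, your computational plan for enumerating the Pl\"ucker fan---taking the common refinement of the $105$ tropical hypersurfaces in \polymake---is exactly the naive approach the paper rejects: enumerating cones of this refinement amounts to testing feasibility of the $4^{105}$ sign patterns on the trinomials, which is ``infeasible in practice.'' The missing idea, and the actual heart of the paper's proof, is the encoding of points of $\Dr(3,n)$ by abstract tree arrangements (Section~\ref{sec:trees}), which permits a \emph{recursive} computation of $\Dr(3,7)$ from $\Dr(3,6)$, organized so that the $\Sym_7$-action is exploited and equations are imposed before inequalities; this is what brings the running time down to about an hour. Relatedly, you identify the Gr\"obner-versus-Pl\"ucker comparison as the main obstacle, but the real bottleneck is enumerating $\Dr(3,7)$ at all. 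A last, smaller point: the software \homology accepts only simplicial complexes, so you cannot ``feed the regular cell complex of $\Dr(3,7)$'' to it directly; the paper circumvents this by passing to the crosscut complex, which is homotopy equivalent to $\Dr(3,7)$ by Bj\"orner's theorem. Your cell-attachment argument could be made to work (indeed the Fano cells are simplices whose boundary facets are cones of $\Gr(3,7)$), but as written the verification step is not executable.
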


We note that the combinatorial and algebraic notions in this paper are
compatible with the geometric theory developed in Mikhalkin's book
\cite{Mik}. We here use ``$\min$'' for tropical addition, the set
$\,\TA^{k-1}=\R^k/\R(1,1,\dots,1)\,$ is the \emph{tropical torus}, and
the tropical projective space $\TP^{k-1}$ is a compactification of
$\TA^{k-1}$ which is a closed simplex.

The symmetric group $\Sym_7$ acts naturally on both $\Gr(3,7)$ and
$\Dr(3,7)$, and it makes sense to count their cells up to this
symmetry.  The face numbers of the underlying polytopal complexes
modulo $\Sym_7$ are
\begin{eqnarray*}
f(\Gr(3,7) \bmod \Sym_7) &  = & (6,37,140,296,300,125)\quad \text{and} \\
f(\Dr(3,7) \bmod \Sym_7) &  = & (5,30,107,217,218,94,1)\;.
\end{eqnarray*}
Thus the Grassmannian $\Gr(3,7)$ modulo $\Sym_7$ has $125 $
five-dimensional simplices, and these are merged to $94$
five-dimensional polytopes in the Dressian $\Dr(3,7)$ modulo $\Sym_7$.
One of these cells is not a facet because it lies in the unique cell
of dimension six. This means that $\Dr(3,7)$ has $93 + 1 = 94$ facets
(= maximal cells) up to the $\Sym_7$-symmetry.

Each point in $\Dr(3,n)$ determines a plane in $\TP^{n-1}$.  This map
was described in \cite{Spe1, SS} and we recall it in Section
\ref{sec:pictures}.  The cells of $\Dr(3,n)$ modulo $\Sym_n$
correspond to combinatorial types of tropical planes.  Facets of
$\Dr(3,n)$ correspond to \emph{generic planes} in $\TP^{n-1}$:

\begin{cor}
  The number of combinatorial types of generic planes in $\TP^6$ is
  $94$. The numbers of types of generic planes in
  $\TP^3$, $\TP^4$, and $\TP^5$ are $1$, $1$, and $7$, respectively.
\end{cor}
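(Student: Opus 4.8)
The plan is to deduce all four counts from the dictionary between generic tropical planes and maximal cells of the Dressian. By the correspondence recalled in the paragraph preceding the statement, and to be established in Section~\ref{sec:pictures}, the combinatorial types of generic planes in $\TP^{n-1}$ are in bijection with the facets (maximal cells) of the Pl\"ucker fan on $\Dr(3,n)$, taken modulo the natural $\Sym_n$-action on the ground set $[n]$. In each of the four cases it therefore suffices to count the $\Sym_n$-orbits of facets of $\Dr(3,n)$.

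For $n=7$ this is precisely the face-number computation reported above. Theorem~\ref{thm:Dr37} supplies the $f$-vector of $\Dr(3,7)$, and the ensuing analysis modulo $\Sym_7$ shows that the $125$ five-dimensional simplex-orbits of $\Gr(3,7)$ merge into $94$ polytope-orbits; of these, $93$ remain facets, while the one lying inside the unique six-dimensional cell is no longer maximal, so that together with the six-cell we obtain $93+1=94$ maximal cells up to symmetry. Hence there are exactly $94$ combinatorial types of generic planes in $\TP^6$.

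For $n\le 6$ the corresponding counts come from the much smaller Dressians, which here coincide with the Grassmannians as sets. When $n=4$ the reduced fan $\Dr(3,4)$ is zero-dimensional, a single cone, giving one type. When $n=5$ the fan $\Dr(3,5)=\Gr(3,5)$ is one-dimensional, and a short computation (equivalently, the duality $\Gr(3,5)\cong\Gr(2,5)$ with the edge-transitive Petersen graph) shows a single $\Sym_5$-orbit of edges, again one type. When $n=6$ we invoke the classification of planes in $\TP^5$ from \cite[\S5]{SS}: the three-dimensional complex $\Dr(3,6)=\Gr(3,6)$ has seven $\Sym_6$-orbits of facets, which are exactly the seven diagrams of Figure~\ref{fig:planes}.

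The argument involves no hard step once the bijection is in place; the only point demanding care is its precise formulation, namely that \emph{generic} should mean that the tropical Pl\"ucker vector lies in the relative interior of a maximal cone, and that two generic planes share a combinatorial type exactly when their cones lie in a common $\Sym_n$-orbit. Granting the well-definedness and fiber description furnished by Section~\ref{sec:pictures}, the corollary follows immediately from the facet counts, the genuine labor having already been invested in the polyhedral and homological computations behind Theorem~\ref{thm:Dr37}.
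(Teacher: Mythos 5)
Your proposal is correct and follows essentially the same route as the paper: both reduce the count of combinatorial types of generic planes to counting maximal cells of $\Dr(3,n)$ modulo $\Sym_n$, handling $n=4,5$ by the trivial structure of $\Dr(3,4)$ and the Petersen graph $\Dr(3,5)=\Gr(3,5)$, citing \cite[\S 5]{SS} for the seven types when $n=6$, and invoking Theorem~\ref{thm:Dr37} together with the $93+1=94$ facet-orbit count for $n=7$. The only cosmetic difference is that the paper also names the unique types explicitly (the cone over $K_4$, and the plane dual to the trivalent tree with five leaves), which your orbit-counting argument subsumes.
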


\begin{proof} The unique generic plane in $\TP^3$ is the cone over the
  complete graph $K_4$. Planes in $\TP^4$ are parameterized by the
  Petersen graph $\,\Dr(3,5) = \Gr(3,5)$, and the unique generic type
  is dual to the trivalent tree with five leaves.  The seven types of
  generic planes in $\TP^5$ were derived in \cite[\S 5]{SS}. Drawings
  of their bounded parts are given in Figure~\ref{fig:planes}, while
  their unbounded cells are represented by the tree arrangements in
  Table~\ref{tab:trees} below.  The number $94$ for $n = 7$ is derived
  from Theorem \ref{thm:Dr37}.
\end{proof}

A complete census of all combinatorial types of tropical planes in
$\TP^6$ is posted at
\begin{quote}\small
  \url{www.uni-math.gwdg.de/jensen/Research/G3_7/grassmann3_7.html}.
\end{quote}
This web site and the notation used therein is a main contribution of
the present paper.  In the rest of this section we explain how our two
classification theorems were obtained.

\medskip

\begin{proof}[Computational proof of Theorem \ref{thm:Gr37}]
  The Grassmannian $\Gr(3,7)$ is the tropical variety defined by the
  Pl\"ucker ideal $I_{3,7}$ in the polynomial ring $\K[p_S]$ in $35$
  unknowns. We first suppose that $\K$ has characteristic zero, and
  for our computations we take $\K = \Q$.  The subvariety of
  $\PP^{34}_\Q$ defined by $I_{3,7}$ is irreducible of dimension $12$ and
  has an effective six-dimensional torus action. The Bieri-Groves
  Theorem \cite{BG} ensures that $\Gr(3,7)$ is a pure five-dimensional
  subcomplex of the Gr\"obner complex of $I_{3,7}$.  Moreover, by
  \cite[Theorem~3.1]{BJSST}, this complex is connected in codimension
  one.  The software \Gfan \cite{Gfan} exploits this connectivity by
  traversing the facets exhaustively when computing $\Gr(3,7) =
  \tropical{I_{3,7}}$.

  The input to \Gfan is a single maximal Gr\"obner cone of the
  tropical variety. The cone is, as described in the \Gfan manual,
  represented by a pair of Gr\"obner bases. Knowing a relative
  interior point of a maximal cone we can compute this pair with the
  command
\begin{verbatim}
gfan_initialforms --ideal --pair
\end{verbatim} 
  run on the input
\begin{verbatim}
Q[p123,p124,p125,p126,p127,p134,p135,p136,p137,p145,p146,p147,
p156,p157,p167,p234,p235,p236,p237,p245,p246,p247,p256,p257,p267,
p345,p346,p347,p356,p357,p367,p456,p457,p467,p567]
{
p123*p145-p124*p135+p125*p134,
....
p123*p456-p124*p356+p125*p346+p126*p345,
....
p347*p567-p357*p467+p367*p457
}
( 0, 0, 0, 0, 0, 0, 0, 0, 0, -2, -3, -2, -2, -3, -2, 0, 0, 0, 0,
-3, -1, -2, -1, -2, -1, -2, -1, -3, -1, -2, -1, -3, -4, -3, -5).
\end{verbatim}
  The polynomials are the $140$ quadrics which minimally generate the
  Pl\"ucker ideal $I_{3,7}$. Among these are $105$ three-term
  relations and $35$ four-term relations.  Since \Gfan uses the
  max-convention for tropical addition, weight vectors have to be
   negated. The output
  is handed over to the program \texttt{gfan\_tropicaltraverse}, which
  computes all other maximal cones. For this computation to finish it
  is decisive to use the \texttt{--symmetry} option. The symmetric
  group $\Sym_7$ acts on the tropical Pl\"ucker coordinates as a
  subgroup of $\Sym_{35}$.  In terms of classical Pl\"ucker
  coordinates, these symmetries only exist if we simultaneously
  perform sign changes, such as $p_{132} = -p_{123}$. We inform \Gfan
  about these sign changes using \texttt{--symsigns}, and we specify the
  sign changes on the input as elements of $\{-1,+1\}^{35}$ together
  with the generators of $\Sym_7\subset \Sym_{35}$ after the Gr\"obner
  basis pair produced above:
\begin{verbatim}
{(15,16,17,18,0,19,20,21,1,22,23,2,24,3,4,25,26,27,5,28,29,6,30,7,8,31,
32,9,33,10,11,34,12,13,14),(0,1,2,3,4,15,16,17,18,19,20,21,22,23,24,5,
6,7,8,9,10,11,12,13,14,25,26,27,28,29,30,31,32,33,34)}
{(1,1,1,1,1,1,1,1,1,1,1,1,1,1,1,1,1,1,1,1,1,1,1,1,1,1,1,1,1,1,1,1,1,1,1),
(-1,-1,-1,-1,-1,1,1,1,1,1,1,1,1,1,1,1,1,1,1,1,1,1,1,1,1,1,1,1,1,1,1,1,1,
1,1)}
\end{verbatim}
  Before traversing $\Gr(3,7)$, \Gfan verifies algebraically that these
  indeed are symmetries.

  In order to handle a tropical variety as large as $\Gr(3,7)$, the
  implementation of the traversal algorithm in \cite{BJSST} was improved in
  several ways.  During the traversal of the maximal cones up to symmetry,
  algebraic tests were translated into polyhedral containment questions
  whenever possible. Since the fan turned out to be simplicial, computing the
  rays could be reduced to linear algebra while in general \Gfan uses the
  double description method of \cddlib~\cite{Cddlib}. In the subsequent
  combinatorial extraction of all faces up to symmetry, checking if two cones
  are in the same orbit can be done at the level of canonical interior points.
  Checking if two points are equal up to symmetry was done by running through
  all permutations in the group.  This may not be optimal but is sufficient
  for our purpose. For further speed-ups we linked \Gfan to the floating point
  LP solver \SoPlex \cite{Soplex} which produced certificates verifiable in
  integer arithmetic. In case of a failure caused by round-off errors, the
  program falls back on \cddlib which solves the LP problem in exact
  arithmetic. The running time for the computation is approximately 25 hours
  on a standard desktop computer with \Gfan version~0.4, which will be
  released by May 2009.  The output of \Gfan is in \polymake \cite{polymake}
  format, and the program \homology~\cite{homology} was used to compute the
  integral homology of the underlying polytopal complex.

  The above computations established our result in characteristic
  zero.  To obtain the same result for prime characteristics $p \geq
  3$, we used \Macaulay to redo all Gr\"obner basis
  computations, one for each cone in $\Gr(3,7)$, in the polynomial
  ring $\Z[p_S]$ over the integers. We found that all but one of the
  initial ideals $\initial_\lambda(I_{3,6})$ arise from $I_{3,6}$ via
  a Gr\"obner basis whose coefficients are $+1$ and $-1$. Hence
  these cones of $\Gr(3,7)$ are characteristic-free. The only
  exception is the Fano cone which will be discussed in the
  end of Section~\ref{sec:matroids}.
\end{proof}

\begin{proof}[Computational proof of Theorem \ref{thm:Dr37}]
  For $d=3$ and $n=7$ there are $105$ three-term Pl\"ucker relations
  (\ref{eq:3term}). A vector $\lambda \in \R^{35}$ lies in $\Dr(3,7)$
  if and only if the initial form of each three-term relation with
  respect to $\lambda$ has either two or three terms. There are four
  possibilities for this to happen, and each choice is described by a
  linear system of equations and inequalities.  This system is
  feasible if and only if the corresponding cone exists in the
  Dressian $\Dr(3,7)$, and this can be tested using linear
  programming.  In theory, we could compute the Dressian by running
  a loop over all $4^{105}$ choices and list which choices determine a
  non-empty cone of $\Dr(3,7)$.  Clearly, this is infeasible in
  practice.

  To control the combinatorial explosion, we employed the
  representation of tropical planes by abstract tree arrangements
  which will be introduced in Section 4. This representation allows a
  recursive computation of $\Dr(3,n)$ from $\Dr(3,n-1)$. The idea is
  similar to what is described in the previous paragraph, but the
  approach is much more efficient.  By taking the action of the
  symmetric group of degree $n$ into account and by organizing this
  exhaustive search well enough this leads to a viable computation.  A
  key issue seems to be to focus on the equations early in the
  enumeration, while the inequalities are considered only at the very
  end.  A \polymake implementation enumerates all cones of $\Dr(3,7)$
  within one hour.  The same computation for $\Dr(3,6)$ takes less
  than two minutes.

  Again we used \homology for computing the integral homology of the
  underlying polytopal complex of $\Dr(3,7)$.  Since $\Dr(3,7)$ is not
  simplicial it cannot be fed into $\homology$ directly.  However, it
  is homotopy equivalent to its crosscut complex, which thus has the
  same homology \cite{Bjor}.  The \emph{crosscut complex} (with
  respect to the atoms) is the abstract simplicial complex whose
  vertices are the rays of $\Dr(3,7)$ and whose faces are the subsets
  of rays which are contained in cones of $\Dr(3,7)$.  The computation
  of the homology of the crosscut complex takes about two hours.
\end{proof}

\begin{remark} \label{rem:DW} Following \cite{DW1,DW2}, a
  \emph{valuated matroid} of rank~$d$ on the set $[n]$ is a map
  $\pi:[n]^d\to\R\cup\{\infty\}$ such that $\pi(\omega)$ is
  independent of the ordering of the sequence $\omega$,
  $\,\pi(\omega)=\infty$ if an element occurs twice in $\omega$, and
  the following axiom holds: for every $(d - 1)$-subset $\sigma$ and
  every $(d + 1)$-subset $\tau=\{\tau_1,\tau_2,\ldots,\tau_{d+1}\}$ of
  $[n]$ the minimum of
  \[
  \pi(\sigma\cup\{\tau_i\})+\pi(\tau\backslash\{\tau_i\})
  \quad \hbox{for} \quad 1\le i\le d+1
  \]
  is attained at least twice.  Results of Dress and Wenzel \cite{DW1} 
  imply that
  tropical Pl\"ucker vectors and valuated matroids are the same.  To
  see this, one applies \cite[Theorem~3.4]{DW1} to the perfect
  fuzzy ring arising from $(\R\cup\{\infty\},\min,+)$ via the
  construction in \cite[page~182]{DW1}.
\end{remark}

\section{Matroid Subdivisions}\label{sec:matroids}

A \emph{weight function} $\lambda$ on an $n$-dimensional polytope $P$
in $\R^n$ assigns a real number to each vertex of $P$.  The lower
facets of the lifted polytope $\conv\smallSetOf{(v,\lambda(v))}{v
  \text{ vertex of } P}$ in $\R^{n+1}$ induce a polytopal subdivision
of $P$.  Polytopal subdivisions arising in this way are called
\emph{regular}. The set of all weights inducing a fixed subdivision
forms a (relatively open) polyhedral cone, and the set of all these
cones is a complete fan, the \emph{secondary fan} of $P$.  The
dimension of the secondary fan as a spherical complex is $m-n-1$,
where $m$ is the number of vertices of~$P$. For a detailed
introduction to these concepts see~\cite{DRS}.

We denote the canonical basis vectors of $\R^n$ by
$e_1,e_2,\dots,e_n$, and we abbreviate $e_X:=\sum_{i\in X}e_i$ for any
subset $X\subseteq[n]$. For a set $\cX\subseteq\tbinom{[n]}{d}$ we
define the polytope
\[
\MatroidPolytope{\cX} \ := \ \conv\SetOf{e_X}{X\in\cX} \, .
\]
The $d$-th
\emph{hypersimplex} in $\R^n$ is the special case
\[
\Delta(d,n) \ := \ \MatroidPolytope{\tbinom{[n]}{d}} \, .
\]
A subset $\cM\subseteq\tbinom{[n]}{d}$ is a \emph{matroid} of
\emph{rank} $d$ on the set $[n]$ if the edges of the polytope
$\MatroidPolytope{\cM}$ are all parallel to the edges of
$\Delta(d,n)$; in this case $\MatroidPolytope{\cM}$ is called a
\emph{matroid polytope}, and the elements of $\cM$ are the
\emph{bases}.  That this definition really describes a matroid as, for
example, in White \cite{White86}, is a result of Gel{\cprime}fand,
Goresky, MacPherson, and Serganova~\cite{GGMS87}.  Moreover, each face
of a matroid polytope is again a matroid polytope \cite{FS}.  A polytopal
subdivision of $\Delta(d,n)$ is a \emph{matroid subdivision} if
each of its cells is a matroid polytope.

\begin{prop}{\rm (Speyer \cite[Proposition~2.2]{Spe1})}
  \label{prop:Speyer} A weight vector $\lambda \in
  \R^{\tbinom{[n]}{d}}$ lies in the Dressian $\Dr(d,n)$, seen as a
  fan, if and only if it induces a matroid subdivision of the
  hypersimplex $\Delta(d,n)$.
\end{prop}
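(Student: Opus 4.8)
The plan is to prove both implications through a local analysis on the octahedral faces of $\Delta(d,n)$, combined with the characterization of matroid polytopes by Gel{\cprime}fand, Goresky, MacPherson, and Serganova~\cite{GGMS87}. Let $\Sigma_\lambda$ denote the regular subdivision of $\Delta(d,n)$ induced by $\lambda$. In any polytopal complex the $1$-cells are exactly the edges of the maximal cells, and since every face of a matroid polytope is a matroid polytope~\cite{FS}, the subdivision $\Sigma_\lambda$ is a matroid subdivision if and only if every maximal cell is a matroid polytope; by~\cite{GGMS87} this holds if and only if every $1$-cell $[e_X,e_Y]$ is parallel to some difference $e_a-e_b$. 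Writing $r:=|X\setminus Y|$, I call an edge \emph{bad} if $r\ge 2$. Thus the whole statement reduces to the claim that the three term relations~(\ref{eq:3term}) hold for $\lambda$ if and only if $\Sigma_\lambda$ has no bad edge.

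The key local object, for $S\in\binom{[n]}{d-2}$ and distinct $i,j,k,l\notin S$, is the octahedron
\[
O=O_{S,ijkl}:=\conv\smallSetOf{e_{S\cup T}}{T\in\tbinom{\{i,j,k,l\}}{2}}.
\]
First I would verify that $O$ is a face of $\Delta(d,n)$: it is exposed by the functional taking value $2$ on $S$, value $1$ on $\{i,j,k,l\}$, and $0$ elsewhere. Its three pairs of antipodal vertices $\{e_{Sij},e_{Skl}\}$, $\{e_{Sik},e_{Sjl}\}$, $\{e_{Sil},e_{Sjk}\}$ match the three monomials of~(\ref{eq:3term}), and the subdivision $\Sigma_\lambda|_O$ is governed by the three diagonal heights $\tfrac12(\lambda_{Sij}+\lambda_{Skl})$, $\tfrac12(\lambda_{Sik}+\lambda_{Sjl})$, $\tfrac12(\lambda_{Sil}+\lambda_{Sjk})$ at the common center of $O$. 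There are three outcomes: (a) a unique minimizing diagonal becomes an edge and $O$ is triangulated along it, producing a bad edge; (b) exactly two diagonals tie for the minimum, and $O$ splits along the equatorial square into two square pyramids, all of whose edges have the form $e_a-e_b$; (c) all three tie, and $O$ stays whole, equal to the matroid polytope of $U_{2,4}$. Since scaling the three heights by $2$ does not move the minimum, the relation~(\ref{eq:3term}) for $(S;ijkl)$ holds precisely in cases (b) and (c); equivalently, $\Sigma_\lambda|_O$ is a matroid subdivision if and only if that relation holds.

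This local dictionary disposes of one implication and part of the other. If $\Sigma_\lambda$ is a matroid subdivision, then restricting the weight to the face $O_{S,ijkl}$ induces $\Sigma_\lambda|_O$, whose cells are faces of matroid polytopes, hence matroid polytopes; so case (a) is excluded and every three term relation holds. Conversely, suppose all relations~(\ref{eq:3term}) hold and $\Sigma_\lambda$ has a bad edge $[e_X,e_Y]$; choose one with $r$ minimal. If $r=2$, then $X=S\cup\{i,j\}$ and $Y=S\cup\{k,l\}$ with $S=X\cap Y$, so $e_X,e_Y$ are antipodal in $O_{S,ijkl}$, the edge survives in $\Sigma_\lambda|_O$, and we are forced into case (a), contradicting the relation for $(S;ijkl)$. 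Equivalently, phrased through the basis set $\cM=\smallSetOf{X}{e_X \text{ a vertex of the cell}}$, the octahedron $O_{X\cap Y,ijkl}$ yields the single-element exchange required to verify the matroid axiom when $r=2$.

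The remaining and hardest point is to exclude bad edges with $r\ge 3$, where $e_X$ and $e_Y$ no longer share an octahedral face and~(\ref{eq:3term}) cannot be applied directly. My plan is an induction on $n$ using the two coordinate facets: $\{x_a=1\}\cong\Delta(d-1,n-1)$ and $\{x_a=0\}\cong\Delta(d,n-1)$, on each of which the restricted three term relations are exactly those inherited from $\Delta(d,n)$. For a minimal bad edge, if $X\cap Y\neq\emptyset$ I restrict to $\{x_a=1\}$ for some $a\in X\cap Y$, and if $X\cup Y\neq[n]$ to $\{x_a=0\}$ for some $a\notin X\cup Y$; in either case the bad edge persists in a strictly smaller hypersimplex, contradicting the inductive hypothesis. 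The genuine obstacle is the residual \emph{self-complementary} case $X\cap Y=\emptyset$ and $X\cup Y=[n]$, which forces $n=2d$ and $r=d\ge 3$: here neither facet contains both $e_X$ and $e_Y$, so the reduction collapses. I expect this case to need a separate, direct argument: after normalizing $\lambda$ by an affine function so that it vanishes exactly on the vertices of the cell carrying $[e_X,e_Y]$ and is positive elsewhere, one exploits the abundance of octahedral relations~(\ref{eq:3term}) available when $d\ge 3$ to force an intermediate basis of the cell realizing a one-step exchange between $X$ and $Y$, which contradicts either the minimality of $r$ or the fact that $[e_X,e_Y]$ is a $1$-cell.
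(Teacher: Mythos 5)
Your overall frame is sound, and it is worth noting that the paper itself offers no proof of this proposition: it is quoted from Speyer, so the benchmark is the proof of \cite[Proposition~2.2]{Spe1}, which likewise proceeds edge-by-edge. Your reduction of matroidality to the absence of ``bad'' edges (which here is immediate from the paper's definition of a matroid polytope, with \cite{FS} handling faces), your octahedral dictionary --- the induced subdivision of $O_{S,ijkl}$ is governed by the three sums $\lambda_{Sij}+\lambda_{Skl}$, $\lambda_{Sik}+\lambda_{Sjl}$, $\lambda_{Sil}+\lambda_{Sjk}$, with a unique minimum giving a diagonal, a two-way tie the split into two pyramids, and a three-way tie the whole octahedron --- the direction ``matroid subdivision $\Rightarrow$ relations,'' the case $r=2$, and the facet reductions along $\{x_a=1\}\cong\Delta(d-1,n-1)$ and $\{x_a=0\}\cong\Delta(d,n-1)$ are all correct.

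The genuine gap is exactly where you flag it: a putative edge $[e_X,e_{[2d]\setminus X}]$ of $\Sigma_\lambda$ in $\Delta(d,2d)$ with $d\ge 3$. Here $e_X$ and $e_Y$ lie in no common octahedral face, and each relation (\ref{eq:3term}) constrains only \emph{sums} of weights over antipodal pairs of an octahedron, not the values at individual vertices of the cell carrying the edge; so no bookkeeping of your cases (a)/(b)/(c) over single octahedra visibly produces the ``intermediate basis'' you invoke, and ``one exploits the abundance of octahedral relations to force'' is a hope, not an argument --- it is precisely the nontrivial content of the statement. Your normalization idea is the right start: subtracting the affine functional supporting the cell $P$ is legitimate, since $e_{Sij}+e_{Skl}=e_{Sik}+e_{Sjl}=e_{Sil}+e_{Sjk}$, so all three terms of each relation shift by the same constant and the relations persist for the normalized $\lambda'\ge 0$ vanishing exactly on the vertex set $\cM$ of $P$. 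But to conclude you then need that the three-term relations imply the full $(d+1)$-term exchange of Remark~\ref{rem:DW}: applied with $\sigma=X\setminus\{i\}$ and $\tau=Y\cup\{i\}$, the minimum $0$ attained at $u=i$ must be attained again at some $u=j\in Y\setminus X$, forcing $\lambda'(X-i+j)=\lambda'(Y+i-j)=0$, whence $\cM$ satisfies basis exchange, $P$ is a matroid polytope, and the long diagonal cannot be an edge. That implication (three-term relations $\Rightarrow$ valuated-matroid exchange) is the Dress--Wenzel theorem \cite{DW1} cited in Remark~\ref{rem:DW} --- alternatively Speyer proves the exchange directly by induction --- and it is a substantive theorem you must either cite or prove; as written, your final paragraph presupposes it.
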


The weight functions inducing matroid subdivisions form a
subfan of the secondary fan of $\Delta(d,n)$, and this defines the
\emph{secondary fan structure} on the Dressian $\Dr(d,n)$.  It is not
obvious whether the secondary fan structure and the Pl\"ucker fan
structure on $\Dr(d,n)$ coincide. We shall see in
Theorem~\ref{thm:arrangement} that this is indeed the case
for $d=3$.  In particular, the rays of the Dressian
$\Dr(3,n)$ correspond to coarsest matroid
subdivisions of $\Delta(3,n)$.

\begin{cor}\label{cor:matroid}
  Let $M$ be a connected matroid of rank $d$ on $[n]$ and let
  $\lambda_M \in \{0,1\}^{\tbinom{[n]}{d}}$ be the vector which
  satisfies $\lambda_M(X) = 0$ if $X$ is a basis of $M$ and
  $\lambda_M(X) = 1$ if $X$ is not a basis of $M$.  Then $\lambda_M$
  lies in the Dressian $\Dr(d,n)$, and the corresponding matroid
  decomposition of $\Delta(d,n)$ has the matroid polytope
  $\MatroidPolytope{M}$ as a maximal cell.
\end{cor}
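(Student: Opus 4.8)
The plan is to reduce the first assertion to Speyer's Proposition~\ref{prop:Speyer}, so that proving $\lambda_M\in\Dr(d,n)$ becomes the statement that $\lambda_M$ induces a matroid subdivision of $\Delta(d,n)$. Since the Pl\"ucker fan structure is cut out by the three-term relations~(\ref{eq:3term}), what must be checked is that for every $S\in\binom{[n]}{d-2}$ and all distinct $i,j,k,l\in[n]\setminus S$ the minimum of
\[
\lambda_M(Sij)+\lambda_M(Skl),\quad \lambda_M(Sik)+\lambda_M(Sjl),\quad \lambda_M(Sil)+\lambda_M(Sjk)
\]
is attained at least twice. I would handle the two assertions of the corollary in the opposite order to how they are stated, treating the maximal-cell claim first because it is the cleaner of the two.

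For that claim I would read off the cell selected by the zero weight direction: the cell of the regular subdivision minimizing $\lambda_M$ is $\conv\{e_X : \lambda_M(X)=0\}=\MatroidPolytope{M}$, because $\lambda_M$ vanishes exactly on the bases. This cell is maximal precisely when it is full-dimensional, and this is where the connectivity hypothesis enters: a matroid polytope has dimension $n$ minus the number of connected components of $M$, so $\dim\MatroidPolytope{M}=n-1=\dim\Delta(d,n)$ exactly when $M$ is connected.

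For the relations, the point of departure is that $\lambda_M$ is a $0/1$ vector, so each of the three sums lies in $\{0,1,2\}$. If the minimum equals $2$ then all three sums equal $2$; the substantive case is a minimum of $0$, where some diagonal, say $\{Sij,Skl\}$, consists of two bases. Here I would apply the symmetric exchange axiom to $B_1=S\cup\{i,j\}$ and $B_2=S\cup\{k,l\}$ with $x=i$: there is $y\in\{k,l\}$ such that $B_1-i+y$ and $B_2-y+i$ are both bases, which makes either $\{Sjk,Sil\}$ or $\{Sik,Sjl\}$ a second vanishing diagonal. Thus whenever the minimum is $0$ it is attained twice.

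The hard part will be the one remaining possibility, a minimum equal to $1$ attained only once, which occurs exactly when a single one of the six sets $Sij,\dots,Skl$ is a basis. I would attack it by contraction: since $S$ is independent of size $d-2$ the matroid $M/S$ has rank $2$, and the hypothesis forces $\{i,j\}$ to be the unique basis among the six two-element subsets of $\{i,j,k,l\}$ in $M/S$; a short rank-$2$ analysis then shows $k$ and $l$ must be loops of $M/S$, that is, $k,l\in\operatorname{cl}(S)$. The crux of the whole proof is to show that such a local degeneracy is incompatible with $M$ being connected. I expect this to be the delicate step, as it is precisely the point where the global connectivity hypothesis must be converted into control over the flats $\operatorname{cl}(S)$ of corank two, and where a naive appeal to connectivity will need to be made precise.
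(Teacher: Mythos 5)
Your case analysis is correct and in fact sharper than the paper's own proof, but the step you postpone is not a gap you can close: it is false, and with it the first assertion of the corollary. Your three cases are exhaustive: minimum $2$ is trivial, minimum $0$ is handled exactly as you say by symmetric exchange, and minimum $1$ attained only once occurs precisely when exactly one of the six sets is a basis, which your rank-$2$ contraction argument correctly reduces to the existence of an independent $(d-2)$-set $S$ with two further elements $k,l\in\operatorname{cl}(S)\setminus S$ and a basis of the form $S\cup\{i,j\}$ --- that is, a corank-$2$ flat with at least $d$ elements. Connectivity does not exclude this. Take $d=3$, $n=6$ and the rank-$3$ matroid $M$ realized by the vectors $(1,0,0)$, $(1,0,0)$, $(1,0,0)$, $(0,1,0)$, $(0,0,1)$, $(1,1,1)$, i.e.\ a triple point $\{1,2,3\}$ together with a generic triangle $\{4,5,6\}$. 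This matroid is connected (the parallel pairs are circuits, and so is $\{1,4,5,6\}$), yet for $S=\{1\}$ and $(i,j,k,l)=(4,5,2,3)$ the three sums are $\lambda_M(145)+\lambda_M(123)=0+1$, $\lambda_M(142)+\lambda_M(153)=1+1$, $\lambda_M(143)+\lambda_M(152)=1+1$, so the minimum is attained only once and $\lambda_M\notin\Dr(3,6)$. Your treatment of the second assertion, by contrast, is sound and matches the paper's: $\MatroidPolytope{M}$ is the lower face of minimal height, hence a cell of the regular subdivision induced by $\lambda_M$, and it is full-dimensional exactly when $M$ is connected; what fails in general is only that this subdivision is a \emph{matroid} subdivision.

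For comparison, the paper's proof of the first assertion is a one-liner --- ``the basis exchange axiom translates into a combinatorial version of the quadratic Pl\"ucker relations (cf.~Remark~\ref{rem:DW})'' --- and this covers only your minimum-$0$ case. It is correct for the trivial valuation taking the value $\infty$ on non-bases, but after replacing $\infty$ by the finite value $1$ the sums $0+1$ and $1+1$ become comparable and your third case genuinely arises; nothing in the exchange axiom addresses it. So your instinct that this was ``the delicate step'' was exactly right: the corollary as stated is erroneous, as was later observed by Joswig and Schr\"oter (\emph{Matroids from hypersimplex splits}), who repair it by introducing split matroids; for connected $M$ one has $\lambda_M\in\Dr(d,n)$ precisely for split matroids, and your corank-$2$-flat condition is the relevant obstruction. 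The error is harmless for this paper's applications: the only matroid fed through the corollary is the Fano matroid $\cF_3$, which is simple with three-point lines, so all its corank-$2$ flats are singletons; there your bad case cannot occur (for any $s,i,j,k,l$ at most two of the six triples $sxy$ are collinear, so at least four are bases and some diagonal has sum $0$), and your exchange argument completes the proof for that case.
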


\begin{proof}
  The basis exchange axiom for matroids translates into a
  combinatorial version of the quadratic Pl\"ucker relations
  (cf.~Remark \ref{rem:DW}), and this ensures that the vector
  $\lambda_M$ lies in the Dressian $\Dr(d,n)$.  By Proposition
  \ref{prop:Speyer}, the regular subdivision of $\Delta(d,n)$ defined
  by $\lambda_M$ is a matroid subdivision.  The matroid polytope
  $\MatroidPolytope{M}$ appears as a lower face in the lifting of
  $\Delta(d,n)$ by $\lambda_M$, and hence it is a cell of the matroid
  subdivision.  It is a maximal cell because ${\rm
    dim}(\MatroidPolytope{M}) = n-1$ if and only if the matroid $M$ is
  connected; see \cite{FS}.
\end{proof}

Each vertex figure of $\Delta(d,n)$ is isomorphic to the product of
simplices $\Delta_{d-1}\times\Delta_{n-d-1}$.  A regular subdivision
of a polytope induces regular subdivisions on its facets as well as on
its vertex figures.  For hypersimplices the converse holds
(see also Proposition~\ref{prop:KK}):

\begin{prop}{\rm (Kapranov \cite[Corollary~1.4.14]{K})}\label{prop:K}.
  Each regular subdivision of 
the product of simplices $\Delta_{d-1}\times\Delta_{n-d-1}$ is
  induced by a regular matroid subdivision of $\Delta(d,n)$.
\end{prop}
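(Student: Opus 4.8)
The plan is to realize the given subdivision as the vertex figure, at a single vertex of $\Delta(d,n)$, of a matroid subdivision obtained from a tropical matrix. By the $\Sym_n$-action on $\Delta(d,n)$ it suffices to treat the vertex $e_X$ with $X=\{1,\dots,d\}$. The $d(n-d)$ edges of $\Delta(d,n)$ emanating from $e_X$ point in the directions $e_j-e_i$ with $i\in X$ and $j\in[n]\setminus X$, and under the isomorphism of the vertex figure with $\Delta_{d-1}\times\Delta_{n-d-1}$ the vertex corresponding to such an edge is the pair $(i,j)$. A regular subdivision $\Sigma$ of this product is induced by some weight function, i.e.\ by a real $d\times(n-d)$-matrix $(w_{ij})$ with $i\in X$, $j\notin X$. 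The goal is to produce a weight vector $\lambda\in\R^{\tbinom{[n]}{d}}$ lying in $\Dr(d,n)$ whose induced matroid subdivision restricts to $\Sigma$ at $e_X$.

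First I would assemble $(w_{ij})$ into a $d\times n$ tropical matrix $W$ with rows indexed by $X$ and columns by $[n]$: for a column $x\in X$ put the tropical unit vector ($W_{x,x}=0$ and $W_{i,x}=\infty$ for $i\neq x$), and for a column $x\notin X$ put $W_{i,x}=w_{i,x}$. I then define $\lambda$ by the tropical $d\times d$-minors of $W$,
\[
\lambda(Y) \ := \ \min_{\sigma}\ \sum_{i\in X} W_{i,\sigma(i)}\, ,
\]
the minimum ranging over the bijections $\sigma\colon X\to Y$. Every $Y$ admits a bijection avoiding the $\infty$-entries (the columns of $Y$ lying in $X$ force distinct diagonal rows, and the remaining rows may be matched arbitrarily to the all-finite columns outside $X$), so $\lambda$ is finite. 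The key point is that $\lambda$ is a tropical Pl\"ucker vector: lifting the entries of $W$ to sufficiently generic elements of a field with valuation and passing to the classical Pl\"ucker coordinates of the resulting $d\times n$-matrix, no cancellation occurs in the minors, so $\lambda$ is the coordinatewise valuation of a point of the Grassmannian. Hence $\lambda\in\Gr(d,n)\subseteq\Dr(d,n)$, and by Proposition~\ref{prop:Speyer} it induces a matroid subdivision of $\Delta(d,n)$.

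It remains to identify the subdivision that $\lambda$ induces on the vertex figure at $e_X$. A direct computation gives $\lambda(X)=0$ and, writing $X_{ij}:=(X\setminus\{i\})\cup\{j\}$ for the neighbour of $X$, the columns of $W$ indexed by $X\setminus\{i\}$ force their diagonal entries, so that $\lambda(X_{ij})=w_{ij}$. Because the cells of a matroid subdivision are matroid polytopes, their edges at $e_X$ are parallel to edges of $\Delta(d,n)$; consequently the induced subdivision of the vertex figure is the regular subdivision of $\Delta_{d-1}\times\Delta_{n-d-1}$ whose weight at the vertex $(i,j)$ is $\lambda(X_{ij})-\lambda(X)=w_{ij}$, that is, exactly $\Sigma$.

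The main obstacle is the second step, namely verifying that the tropical minors of $W$ really form a tropical Pl\"ucker vector. I expect to handle this by the genericity/realizability argument above (so that, incidentally, $\Sigma$ is even realized by a \emph{realizable} matroid subdivision); the alternative is to check the valuated matroid axiom of Remark~\ref{rem:DW} directly for the tropical minors of a matrix. A secondary point requiring care is the claim that the vertex-figure subdivision of a regular subdivision can be read off from the weights on the neighbours of the vertex; for matroid subdivisions this is immediate since all edges at $e_X$ lie in the ambient edge directions, but it should be stated explicitly.
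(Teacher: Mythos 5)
The paper contains no proof of Proposition~\ref{prop:K}: the statement is quoted from Kapranov \cite[Corollary~1.4.14]{K}, so there is no internal argument to compare yours against. Your proposal is a correct, self-contained proof, and it is the natural tropical one: identify $\Delta_{d-1}\times\Delta_{n-d-1}$ with the vertex figure of $\Delta(d,n)$ at $e_X$, encode the given weights as a tropical $d\times n$-matrix whose columns indexed by $X$ are tropical unit vectors, and let $\lambda$ be the vector of tropical maximal minors. Your finiteness check and the evaluations $\lambda(X)=0$, $\lambda(X_{ij})=w_{ij}$ are right, since the $\infty$-entries force the diagonal matching on the columns in $X$. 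The generic-lift step is also sound: lift each finite entry to a series with valuation $w_{ij}$ and generic leading coefficient and each $\infty$ to $0$; in any maximal minor the surviving bijections contribute distinct monomials in the leading coefficients, so the terms of least valuation cannot cancel identically, whence the valuation of every Pl\"ucker coordinate of the lifted matrix equals $\lambda$. Thus $\lambda\in\Gr(d,n)\subseteq\Dr(d,n)$, and Proposition~\ref{prop:Speyer} gives the regular matroid subdivision. (Alternatively one can verify the valuated-matroid axiom of Remark~\ref{rem:DW} directly for tropical minors.) Your route in fact proves slightly more than the statement: the inducing matroid subdivision can be taken \emph{realizable}, i.e.\ dual to a point of the tropical Grassmannian, which is consonant with Kapranov's original argument via degenerations of torus orbits in the Grassmannian.

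The one step I would not label ``immediate'' is the last one: that the subdivision induced on the vertex figure at $e_X$ is the regular subdivision of $\Delta_{d-1}\times\Delta_{n-d-1}$ with weights $\lambda(X_{ij})-\lambda(X)$. For an arbitrary regular subdivision this can fail, because edges at $e_X$ may be subdivided, and then the lower envelope has smaller slope along an edge than the weight of the far endpoint suggests. Matroidness is exactly what rescues the claim, but through two facts worth recording: (a) all cells, and all their faces, are $0/1$-polytopes, so the induced subdivision of each edge $[e_X,e_{X_{ij}}]$ of $\Delta(d,n)$ is trivial, and hence the lower envelope of $\lambda$ is linear on that edge with slope $\lambda(X_{ij})-\lambda(X)$; and (b) the tangent cone at $e_X$ of every cell containing $e_X$ is spanned by its edges at $e_X$, which by (a) are full edges of $\Delta(d,n)$. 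Consequently the directional-derivative function of the lower envelope at $e_X$ is the convex piecewise-linear function that takes the value $w_{ij}$ on the ray through the vertex $(i,j)$ and is linear on each tangent cone; its linearity domains are precisely these tangent cones (distinct cells of a regular subdivision carry distinct affine support functions), and they cut the vertex figure in the regular subdivision induced by $w$. With those few lines added, your proof is complete.
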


A \emph{split} of a polytope is a regular subdivision with exactly two
maximal cells.  By \cite[Lemma~7.4]{HJ}, every split of $\Delta(d,n)$ is a matroid subdivision.
Collections of splits that are pairwise compatible define a
simplicial complex, known as the \emph{split complex} of $\Delta(d,n)$.
It was shown in \cite[Section~7]{HJ} that the regular subdivision defined
by pairwise compatible splits is always a matroid subdivision. The
following result appears in \cite[Theorem~7.8]{HJ}:

\begin{prop}
  \label{prop:HJ}
  The split complex of $\Delta(d,n)$ is a simplicial subcomplex
  of the Dressian $\Dr(d,n)$, with its secondary complex structure.
They are equal if $d=2$ or $d=n-2$.
\end{prop}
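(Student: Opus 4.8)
The plan is to realize the split complex inside the secondary fan structure on $\Dr(d,n)$ face by face, and then to check that the resulting injection is onto precisely when $d=2$ or $d=n-2$. The first observation is that each split $S$ of $\Delta(d,n)$ determines a single ray of the Dressian. A split is a regular subdivision with only two maximal cells, so modulo the lineality space (affine functions) and positive scaling there is a unique weight function $\lambda_S$ inducing it; one may take $\lambda_S=\max\{0,\alpha_S\}$ for an affine $\alpha_S$ vanishing on the split hyperplane $H_S$. Since every split of $\Delta(d,n)$ is a matroid subdivision by \cite[Lemma~7.4]{HJ}, Proposition~\ref{prop:Speyer} places $\lambda_S$ in $\Dr(d,n)$, and being coarsest forces its secondary cone to be a ray. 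Thus the vertices of the split complex are genuine rays of the secondary fan structure.

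Next I would treat a face of the split complex, that is, a set $S_1,\dots,S_k$ of pairwise compatible splits. Compatibility means the hyperplanes $H_{S_1},\dots,H_{S_k}$ do not cross in the interior of $\Delta(d,n)$, so the convex piecewise-linear function $\lambda=\sum_i c_i\lambda_{S_i}$ with all $c_i>0$ is linear on each region cut out by the arrangement and hence induces exactly the common refinement $\Sigma$ of the $S_i$; by \cite[Section~7]{HJ} this $\Sigma$ is a matroid subdivision, so $\lambda\in\Dr(d,n)$. The central claim is that $\operatorname{cone}(\lambda_{S_1},\dots,\lambda_{S_k})$ is exactly the closed secondary cone of $\Sigma$. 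I would show the $\lambda_{S_i}$ are linearly independent modulo the lineality space, each contributing an independent bend along its own hyperplane $H_{S_i}$, and that every weight function inducing $\Sigma$ lies in their nonnegative span. The latter is a split-decomposition statement in the spirit of Bandelt and Dress: the coarsenings of $\Sigma$ should be precisely the common refinements of subsets of $\{S_1,\dots,S_k\}$, so that the face lattice of the secondary cone is the Boolean lattice on the splits and the cone is simplicial with rays $\lambda_{S_i}$. Granting this, the map sending a face $\{S_1,\dots,S_k\}$ to the secondary cone of $\Sigma$ is an injection of simplicial complexes respecting the face relation, which exhibits the split complex as a simplicial subcomplex of $\Dr(d,n)$.

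For the equality statements I would argue that when $d=2$ every coarsest matroid subdivision is a split and every matroid subdivision is split-decomposable. Here $\Dr(2,n)=\Gr(2,n)$ is the space of phylogenetic trees; a point is a tree metric on $n$ leaves, and its dual subdivision of $\Delta(2,n)$ is coarsest exactly when the tree has a single interior edge, which is a split $[n]=A\sqcup B$. A general tree metric is the nonnegative sum of the split metrics of its interior edges, and these are pairwise compatible, so every cell of $\Dr(2,n)$ is spanned by splits and the split complex fills out the whole Dressian; this is the content of Buneman's tree theorem. The case $d=n-2$ reduces to $d=2$ via the complementation isomorphism $X\mapsto[n]\setminus X$, which carries $\Delta(n-2,n)$ onto $\Delta(2,n)$, sends matroid subdivisions to matroid subdivisions through matroid duality, and sends splits to splits.

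The main obstacle is the identity $\operatorname{cone}(\lambda_{S_1},\dots,\lambda_{S_k})=$ secondary cone of $\Sigma$ in the subcomplex step. The inclusion ``$\subseteq$'' is routine from the arrangement picture, but ``$\supseteq$'' — that a weight function inducing the common refinement has no residual component outside the span of the constituent split functions — is exactly the vanishing of the split-prime part in a Bandelt--Dress-type decomposition, and it is what guarantees that the secondary cone is simplicial rather than merely containing the split cone. I expect this to be the technical heart of the argument, and also the reason equality can fail for intermediate $d$, where coarsest matroid subdivisions (such as the Fano cone) need not be splits.
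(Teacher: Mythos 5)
A point of orientation first: the paper itself contains no proof of Proposition~\ref{prop:HJ} --- it is quoted directly from \cite[Theorem~7.8]{HJ}, so your proposal can only be compared with the argument in that reference, which it essentially reconstructs. Your overall plan is sound and is the right one: splits are coarsest regular subdivisions, hence give rays of the secondary fan, and they lie in $\Dr(d,n)$ by \cite[Lemma~7.4]{HJ} together with Proposition~\ref{prop:Speyer}; a positive combination of weight functions of pairwise compatible splits induces their common refinement, which is a matroid subdivision by \cite[Section~7]{HJ}; equality for $d=2$ comes from tree duality for $\Delta(2,n)$ (\cite[Theorem~3.4]{SS}, Buneman/Bandelt--Dress), and $d=n-2$ follows by the complementation $X\mapsto[n]\setminus X$. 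All of this is correct.

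The one place where you stop short is the step you yourself flag as the ``technical heart'': that the secondary cone of the common refinement $\Sigma$ of compatible splits $S_1,\dots,S_k$ equals $\operatorname{cone}(\lambda_{S_1},\dots,\lambda_{S_k})$ plus the lineality space. You defer this to the vanishing of a split-prime part in a Bandelt--Dress-type decomposition but do not prove it; as written, that is a gap. It closes, however, by an elementary argument that uses compatibility directly, with no decomposition theorem. Compatibility means $H_{S_i}\cap H_{S_j}\cap\operatorname{int}\Delta(d,n)=\emptyset$ for $i\neq j$; since the relative interior of $H_{S_i}\cap\Delta(d,n)$ lies in the interior of the hypersimplex, no $H_{S_j}$ cuts through it, so each $H_{S_i}\cap\Delta(d,n)$ is a single undivided wall of $\Sigma$. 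Consequently $\Sigma$ has exactly $k+1$ maximal cells and its dual graph is a tree with one edge per hyperplane. Now take any weight $w$ inducing $\Sigma$, let $c_i>0$ be the bend of $w$ across the wall in $H_{S_i}$, and normalize each $\lambda_{S_i}$ to have unit bend across its wall; then $w-\sum_i c_i\lambda_{S_i}$ is piecewise linear with zero bend across every wall, hence globally affine because the dual graph is connected. This gives your missing inclusion ``$\supseteq$'', shows the $\lambda_{S_i}$ are linearly independent modulo affine functions (each bends across a different wall), and shows the faces of this simplicial cone are exactly the secondary cones of common refinements of subsets of the $S_i$ --- i.e.\ the subcomplex property. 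The Bandelt--Dress machinery is genuinely needed only where you invoke it for $d=2$, to know that every point of $\Dr(2,n)$ is a tree metric whose induced subdivision is the common refinement of the pairwise compatible splits of its interior edges.
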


Special examples of splits come about in the following way.  The
vertices adjacent to a fixed vertex of $\Delta(d,n)$ span a hyperplane
which defines a split; and these splits are called \emph{vertex
  splits}.  Moreover, two vertex splits are compatible if and only if
the corresponding vertices of $\Delta(d,n)$ are not connected by an
edge. Hence the simplicial complex of stable sets of the edge graph of
$\Delta(d,n)$ is contained in the split complex of $\Delta(d,n)$.

\begin{cor}
  \label{cor:dimlowbd}
  The simplicial complex of stable sets of the edge graph of the
  hypersimplex $\Delta(d,n)$ is a subcomplex of $\Dr(d,n)$.  Hence,
  the dimension of the Dressian $\Dr(d,n)$, seen as a polytopal
  complex, is bounded below by one less than the maximal size of a
  stable set of this edge graph.
\end{cor}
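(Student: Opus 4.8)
The plan is simply to concatenate two inclusions that are already at hand. The paragraph immediately preceding the statement shows, via the map sending a vertex of $\Delta(d,n)$ to its vertex split, that the simplicial complex of stable sets of the edge graph embeds as a subcomplex of the split complex: a set of vertices is stable (no two joined by an edge) exactly when the associated vertex splits are pairwise compatible, i.e.\ span a face of the split complex. I would then invoke Proposition~\ref{prop:HJ}, which identifies the split complex with a simplicial subcomplex of $\Dr(d,n)$ in its secondary complex structure. Composing these two inclusions exhibits the stable set complex of the edge graph as a subcomplex of $\Dr(d,n)$, which is the first assertion of the corollary.

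The dimension statement is then a formal consequence. A simplicial complex has the same or larger dimension than any of its subcomplexes, so $\dim \Dr(d,n)$ is at least the dimension of the stable set complex. By definition the latter is one less than the cardinality of its largest face, and its faces are precisely the stable sets; hence it equals $s-1$, where $s$ is the maximal size of a stable set of the edge graph. Under the embedding above, a stable set of size $k$ corresponds to $k$ pairwise compatible vertex splits, which are rays of $\Dr(d,n)$ spanning a $(k-1)$-dimensional cell of the underlying polytopal complex, so no dimension is lost in passing to the Dressian. This yields the asserted bound $\dim \Dr(d,n) \ge s-1$.

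There is no substantial obstacle here: the corollary is a direct consequence of the compatibility characterization of vertex splits and of Proposition~\ref{prop:HJ}, both established earlier. The only points deserving a line of verification are that distinct vertices of $\Delta(d,n)$ give distinct vertex splits, so that the assignment is injective on vertices, and that the embedding is dimension-preserving, i.e.\ that the $k$ splits attached to a stable set of size $k$ are genuinely independent as rays and span a cell of dimension exactly $k-1$ rather than something smaller. Both are immediate from the compatibility of the vertex splits, so the argument is complete once the two inclusions are stacked.
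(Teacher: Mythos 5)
Your proposal is correct and matches the paper's own reasoning: the paper states this corollary without a separate proof precisely because it follows, exactly as you argue, by combining the preceding observation (vertex splits are pairwise compatible if and only if the corresponding vertices are non-adjacent, embedding the stable set complex into the split complex) with Proposition~\ref{prop:HJ}, which embeds the split complex into $\Dr(d,n)$. The dimension bound then follows formally, as you note, since faces of the split complex of size $k$ are $(k-1)$-dimensional simplices of the Dressian.
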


We shall use this corollary to prove the main result in this section.
Recall that the dimension of the Grassmannian $\Gr(3,n)$ equals
$2n-9$.  Consequently, the following theorem implies that, for large
$n$, most of the tropical planes (cf.~Section \ref{sec:pictures}) are
not realizable.

\begin{thm}\label{thm:dim_bound}
  The dimension of  the Dressian $\Dr(3,n)$ is of order $\Theta(n^2)$.
\end{thm}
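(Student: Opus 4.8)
The plan is to establish two matching estimates, $\dim\Dr(3,n)=O(n^2)$ and $\dim\Dr(3,n)=\Omega(n^2)$, so that together they yield the order $\Theta(n^2)$.

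For the lower bound I would invoke Corollary~\ref{cor:dimlowbd}, which reduces the problem to producing a stable set of size $\Omega(n^2)$ in the edge graph of $\Delta(3,n)$. Two triples $X,Y\in\binom{[n]}{3}$ span an edge of $\Delta(3,n)$ exactly when $|X\cap Y|=2$, so a stable set is a family of triples that pairwise meet in at most one element, i.e.\ a partial Steiner triple system. A construction valid for every $n$ is the \emph{sum-zero} family $\{\,\{i,j,k\}:i+j+k\equiv 0\pmod n\,\}$: were two of its members to share two elements, the third would be forced, so distinct triples meet in at most one point; a routine count gives $\sim n^2/6$ of them. In the spirit of Section~3 one may instead take the lines of the projective space $\PG(k,2)$ over $\GF(2)$ on $n=2^{k+1}-1$ points: each line has exactly three points, two distinct lines meet in at most one point, and there are $\binom{n}{2}/3=n(n-1)/6$ of them. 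Either family is a stable set of size $\Theta(n^2)$, so Corollary~\ref{cor:dimlowbd} forces $\dim\Dr(3,n)\ge n^2/6-O(1)$.

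For the matching upper bound I would use the representation of tropical planes by arrangements of metric trees supplied by Theorem~\ref{thm:arrangement}; this is a legitimate forward reference, since that theorem rests on Proposition~\ref{prop:bar_sigma} rather than on the present statement. A point of $\Dr(3,n)$ is encoded by $n$ metric trees, the $i$-th having the $n-1$ elements of $[n]\setminus\{i\}$ as its leaves. Such a tree lies in the space of phylogenetic trees $\Dr(2,n-1)=\Gr(2,n-1)$, and a trivalent tree on $n-1$ leaves has only $n-4$ internal bounded edges, hence contributes at most $n-4$ continuous parameters. The whole arrangement is therefore described by at most $n(n-4)$ parameters, whence $\dim\Dr(3,n)=O(n^2)$. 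Combined with the previous paragraph this gives $\dim\Dr(3,n)=\Theta(n^2)$.

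The substantive step, and the one I expect to be the real obstacle, is the lower bound: one must exhibit an explicit family of $\Omega(n^2)$ triples that is simultaneously a stable set, which is precisely where a combinatorial design (the sum-zero family, a Steiner system, or its $\GF(2)$-projective incarnation) is needed. If one prefers the $\PG(k,2)$ construction there is the minor technical point that it is naturally defined only for the structured values $n=2^{k+1}-1$; this is handled by running it on a subset of $[n]$ of the largest admissible size $m$, where $m>(n-1)/2$ still yields $\Omega(n^2)$ triples. The upper bound, by contrast, is essentially a dimension count once the tree-arrangement dictionary of Theorem~\ref{thm:arrangement} is available.
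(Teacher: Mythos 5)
Your proposal is correct, and its two halves relate differently to the paper's own proof. The lower bound is essentially the paper's argument: the paper also invokes Corollary~\ref{cor:dimlowbd} and produces a stable set of triples pairwise meeting in at most one point, namely the lines of $\PG_{r-1}(2)$ on $2^r-1$ points where $2^r-1\le n<2^{r+1}$ (the ``generalized Fano matroid''), padded onto $[n]$ exactly as in your last paragraph; your sum-zero family $\smallSetOf{\{i,j,k\}}{i+j+k\equiv 0 \pmod{n}}$ is a pleasant variant that works uniformly in $n$ and avoids the padding step. The upper bound is where you genuinely diverge. The paper quotes Speyer's theorem \cite[Theorem~6.1]{Spe1} that any matroid subdivision of $\Delta(3,n)$ has at most $\tbinom{n-2}{2}$ facets and observes that along a flag of faces of the polytopal complex $\Dr(3,n)$ the number of facets of the associated subdivisions strictly increases, giving $\dim\Dr(3,n)\le\tbinom{n-2}{2}-1$; you instead bound the dimension of each cell by the number of edge-length parameters of the corresponding metric tree arrangement via Theorem~\ref{thm:arrangement} (and, as you note, no circularity arises, since that theorem is proved independently of the present one). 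Your route stays inside this paper's own machinery but yields a weaker constant, while the paper's route leans on an external result and gets the sharper bound $\tbinom{n-2}{2}-1$. Two points should be repaired or made explicit. First, a trivalent metric tree on $n-1$ leaves has $2(n-1)-3=2n-5$ edge lengths, not $n-4$: the pendant edges are genuine parameters, and you cannot quotient them away tree by tree, because the lineality space of $\Dr(3,n)$ is only $n$-dimensional while the trees have $n(n-1)$ pendant edges in total; with the corrected count $n(2n-5)$ the bound is still $O(n^2)$, so the conclusion survives. Second, you should say why the parameter count bounds the cell dimension at all: after normalizing by the all-ones vector as in Lemma~\ref{lem:arrangement}, the weight vector is recovered from the tree metrics via $\lambda_{ijk}=\delta_i(j,k)$, so each secondary cone embeds linearly into the product of the trees' metric cones, and the dimension bound follows.
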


For the proof of this result we need one more definition.  The
\emph{spread} of a vector in $\Dr(d,n)$ is the number of maximal cells
of the corresponding matroid decomposition.  The splits are precisely
the vectors of spread~2, and these are rays of $\Dr(d,n)$, seen as a
fan.  The rays of $\Dr(3,6)$ are either of spread 2 or~3;
see~\cite[\S~5]{SS}.  As a result of our computation the spreads of
rays of $\Dr(3,7)$ turn out to be $2$, $3$, and $4$. We note the
following result.

\begin{prop}
  As $n$ increases, the spread of the rays of $\Dr(3,n)$ is not
  bounded by a constant.
\end{prop}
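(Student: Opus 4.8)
The plan is to exhibit, for infinitely many $n$, a single \emph{coarsest} matroid subdivision of $\Delta(3,n)$ whose number of maximal cells grows without bound. By the correspondence recorded just after Proposition~\ref{prop:Speyer}, the rays of $\Dr(3,n)$ are exactly the coarsest matroid subdivisions of $\Delta(3,n)$, and the spread of such a ray is its number of maximal cells; so producing these subdivisions suffices. The feature that makes spreads larger than $2$ possible at all is that $\Dr(3,n)$ is only a \emph{subfan} of the secondary fan of $\Delta(3,n)$: a matroid subdivision $\Sigma$ already spans a ray as soon as every matroid subdivision strictly coarser than $\Sigma$ is trivial, even when $\Sigma$ has proper coarsenings inside the secondary fan. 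Equivalently, $\Sigma$ is a ray precisely when no nontrivial merging of its maximal cells is again a subdivision into matroid polytopes. This is why splits (spread $2$) are not the only rays, and it is consistent with the spreads $3$ for $n=6$ and $4$ for $n=7$ found above.

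To build the subdivisions I would start from connected rank-$3$ matroids $M_n$ on $[n]$ with a rigid system of lines, taken from the finite projective geometries over $\GF(2)$ already used for Theorem~\ref{thm:dim_bound}. Concretely, the rank-$3$ truncation of $\PG(m,2)$ is a simple connected matroid on $2^{m+1}-1$ points whose rank-$2$ flats are the three-point lines of the geometry, and the number of these lines grows with $m$. By Corollary~\ref{cor:matroid} the associated $0/1$ vector $\lambda_{M_n}$ lies in $\Dr(3,n)$ and induces a matroid subdivision with $\MatroidPolytope{M_n}$ as one maximal cell; the other maximal cells are cut off along the hyperplanes $\sum_{i\in L}x_i=2$ indexed by the lines $L$. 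I would show that this produces at least as many further cells as there are lines, so that the spread grows at least linearly in the number of lines. That each piece is a matroid polytope, and that $\lambda_{M_n}\in\Dr(3,n)$, are routine checks via Proposition~\ref{prop:Speyer}.

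The hard part, and the place where the choice of matroid really matters, is \emph{coarseness}: one must guarantee that the resulting subdivision admits no nontrivial proper matroid coarsening, so that it spans a ray rather than sitting in the interior of a higher-dimensional cone of the Dressian. This subtlety is genuine, since for the Fano matroid itself the vector $\lambda$ lands in a maximal cone of $\Gr(3,7)$, not on a ray. To force coarseness I would aim for a rigidity in the incidence pattern of the lines, arranging the maximal cells cyclically around the codimension-$2$ faces so that fusing any proper block of consecutive cells creates an edge not parallel to an edge of $\Delta(3,n)$, hence a non-matroid polytope, and would need this argument to be uniform in $m$. Establishing such coarseness while simultaneously keeping the spread unbounded is the main obstacle; by contrast, membership in $\Dr(3,n)$ and the growth of the cell count follow with comparatively little effort from Proposition~\ref{prop:Speyer} and Corollary~\ref{cor:matroid}.
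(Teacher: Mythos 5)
There is a genuine gap, and it is not merely the coarseness step you flag as ``the main obstacle'': the specific family you propose provably cannot yield rays, so the obstacle cannot be overcome within your framework. For a \emph{simple} rank-$3$ matroid $M$ (such as the rank-$3$ truncation of $\PG_{m}(2)$), the non-bases are the collinear triples, and any two lines share at most one point; hence the non-bases form a stable set in the edge graph of $\Delta(3,n)$, exactly as in the proof of Theorem~\ref{thm:dim_bound}. The vector $\lambda_M$ of Corollary~\ref{cor:matroid} is the sum of the indicator vectors of the non-basis vertices, and each such indicator vector is precisely the weight vector of the vertex split at that vertex. Since these splits are pairwise compatible, $\lambda_M$ lies in the \emph{relative interior} of the simplicial cone of the Dressian spanned by them (Proposition~\ref{prop:HJ} and \cite[\S 7]{HJ}), and the subdivision it induces is their common refinement. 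Consequently each single vertex split is a proper nontrivial matroid coarsening of your subdivision, so $\lambda_{M_n}$ is never a ray once $M_n$ has at least two lines; instead it sits inside a cell of dimension equal to the number of lines. This is exactly what the paper records for $\cF_3$: the Fano cone of $\Dr(3,7)$ is a \emph{six-dimensional} cell generated by seven vertex splits, the opposite of a ray. No ``rigidity of the incidence pattern'' can repair this, because the decomposition of $\lambda_M$ into compatible splits is forced by the construction itself; you would have to abandon Corollary~\ref{cor:matroid} and produce entirely different weight vectors.

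The paper's proof avoids matroid vectors altogether. It invokes Kapranov's result (Proposition~\ref{prop:K}): every regular subdivision of $\Delta_2\times\Delta_{n-4}$ is induced by a regular matroid subdivision of $\Delta(3,n)$, and this correspondence carries rays of the secondary fan of $\Delta_2\times\Delta_{n-4}$ to rays of $\Dr(3,n)$. Via the Cayley trick, rays of that secondary fan are coarsest mixed subdivisions of the dilated triangle $(n-3)\Delta_2$, and these can have arbitrarily many polygonal cells as $n$ grows --- the hexagonal subdivision in \cite[Figure~12]{S} is the witness. The coarseness that your approach must struggle to establish is thus obtained for free: it is inherited from being a ray of a secondary fan of a product of simplices, where ``coarsest'' is a purely polyhedral condition that Kapranov's theorem transports into the Dressian.
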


\begin{proof}
  By Proposition~\ref{prop:K}, each regular subdivision of
  $\Delta_{2}\times\Delta_{n-4}$ is induced by a regular matroid
  subdivision of $\Delta(3,n)$, and hence, in light of the Cayley
  trick \cite{S}, by mixed subdivisions of the dilated triangle
  $(n-3)\Delta_{2}$.  See also Section \ref{sec:trees}.  This
  correspondence maps rays of the secondary fan of
  $\Delta_{2}\times\Delta_{n-4}$ to rays of the Dressian $\Dr(3,n)$.
  Now, a coarsest mixed subdivision of $(n-3)\Delta_2$ can have
  arbitrarily many polygons as $n$ grows large. For an example
  consider the hexagonal subdivision in \cite[Figure 12]{S}.  Hence a
  coarsest regular matroid subdivision of $\Delta(3,n)$ can have
  arbitrarily many facets.
\end{proof}

\begin{proof}[Proof of Theorem \ref{thm:dim_bound}]
  Speyer \cite[Theorem~6.1]{Spe1} showed that the spread of any vector
  in $\Dr(d,n)$ is at most $\tbinom{n-2}{d-1}$.  This is the maximal
  number of facets of any matroid subdivision of $\Delta(d,n)$.
  Consider a flag of faces $F_1\subset F_2\subset\cdots$ in the
  underlying polytopal complex of $\Dr(d,n)$.  For every $i$ the
  subdivision corresponding to $F_i$ has more facets than that of
  $F_{i-1}$.  Hence $\tbinom{n-2}{d-1}-1$ is an upper bound for the
  dimension of $\Dr(d,n)$.  Specializing to $d=3$, this upper bound is
  quadratic.

  We shall apply Proposition~\ref{prop:HJ} to derive the lower bound.  The
  \emph{generalized Fano matroid} $\cF_r$ is a connected simple matroid on
  $2^r-1$ points which has rank $3$ and is defined as follows.  Its
  three-element circuits are the lines of the $(r-1)$-dimensional projective
  space $\PG_{r-1}(2)$ over the field $\GF(2)$ with two elements.  The total
  number of unordered bases of $\cF_r$, that is, non-collinear triples of
  points, equals
  \[
  \beta_r \ := \ \frac{1}{6}(2^r-1)(2^{r}-2)(2^r-4) \, .
  \]
  The number of vertices of $\Delta(3,2^r-1)$ which are not bases of
  $\cF_r$ equals
  \[
  \nu_r \ := \ \binom{2^r-1}{3}-\beta_r \ = \
  \frac{1}{6}(2^r-1)(2^r-2) \, .
  \]
  We claim that the non-bases of $\cF_r$ form a stable set in the edge
  graph of $\Delta(3,2^r-1)$. Indeed, the non-bases are precisely the
  collinear triplets of points, that is, the full point rows of the
  lines in $\PG_{r-1}(2)$.  Two distinct point rows of lines in
  $\PG_{r-1}(2)$ share at most one point, and hence the two
  corresponding vertices of $\Delta(3,2^r-1)$ do not differ by an
  exchange of two bits, which means that they are not connected by an
  edge.

  The quadratic lower bound is now derived from
  Proposition~\ref{prop:HJ} as follows. For given any $n$, let $r$ be the
  unique natural number satisfying $2^r-1 \le n < 2^{r+1}$.  Then the
  generalized Fano matroid $\cF_r$ yields a stable set of size
  $\,\nu_r={1}/{6}(2^r-1)(2^r-2)\ge n^2/24-n/12\,$ in the edge graph
  of $\Delta(3,n)$.  The latter inequality follows from $\,2^r-1\ge
  n/2\,$.
\end{proof}

\begin{figure}[htb]
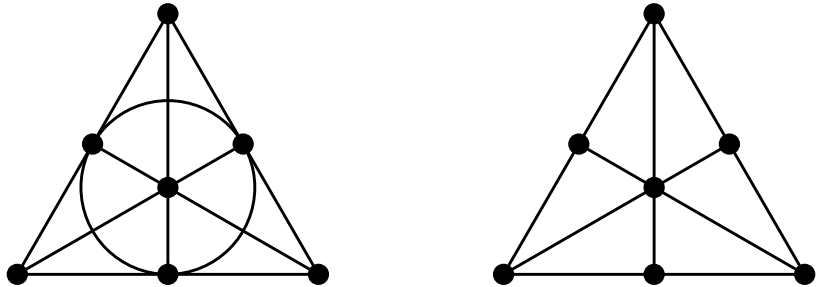
\centering
  \includegraphics[scale=1.156]{non-Fano.1}
  \hskip 2cm
  \includegraphics[scale=1.156]{non-Fano.0}
  \caption{The point configurations for the Fano and
    non-Fano matroids.}
  \label{fig:non-fano}
\end{figure}

\begin{proof}[Computational proof of Theorem \ref{thm:Gr37} (continued)]
  We still have to discuss the Fano cone of $\Dr(3,7)$ and its
  relationship to $\Gr(3,7)$.  The matroid $\cF_3$ in the proof of
  Theorem \ref{thm:dim_bound} corresponds to the Fano plane $\PG_2(2)$,
  which is
  shown in Figure~\ref{fig:non-fano} on the left. We have $\beta_3=28$
  and $\nu_3=7$.  Via Corollary~\ref{cor:matroid} the Fano matroid
  $\cF_3$ gives rise to a cone in the fan $\Dr(3,7)$ which we call the
  \emph{Fano cone}.  The corresponding cell of $\Dr(3,7)$, seen as a
  polytopal complex, has dimension six.  Moreover, all $30$
  six-dimensional cells of $\Dr(3,7)$ come from the Fano matroid
  $\cF_3$ by relabeling. They form a single orbit under the $\Sym_7$
  action, since the automorphism group $\GL_3(2)$ of $\cF_3$ has order
  $168=5040/30$.  If the field $\K$ considered has characteristic $2$
  then the Fano cell of $\Dr(3,7)$ intersects $\Gr(3,7)$ in a
  five-dimensional complex that looks like a tropical hyperplane.

  Finally, suppose that the characteristic of $\K$ is different from
  $2$. Since the Fano matroid is not realizable over $\K$, the Fano cone
  of $\Dr(3,7)$ corresponds to a non-realizable tropical plane in $\TP^6$
  and the intersection of the Fano cell with $\Gr(3,7)$ is a
  five-dimensional simplicial sphere arising from seven copies of the
  non-Fano matroid; see Figure~\ref{fig:non-fano} on the right.  In
  this case this also gives us the difference in the homology of
  $\Dr(3,7)$ and $\Gr(3,7)$.  The Fano six-cells are simplices.
 Each of them cancels precisely one homology cycle of $\Gr(3,7)$. 
\end{proof}

In spite of the results in this sections, many open problems remain.
Here are two specific questions we have concerning the combinatorial
structure of the Dressian $\Dr(3,n)$:

\begin{itemize}
\item Are all rays of $\Dr(3,n)$ always rays of $\Gr(3,n)$?
\item Characterize the rays of $\Dr(3,n)$, that is, coarsest matroid
  subdivisions of $\Delta(3,n)$.
\end{itemize}

\section{Tree Arrangements}\label{sec:trees}

Let $n \ge 4$ and consider an $n$-tuple of metric trees
$T=(T_1,T_2,\dots,T_n)$ where $T_i$ has the set of leaves
$[n]\backslash\{i\}$.  A \emph{metric tree} $T_i$ by definition comes
with non-negative edge lengths, and by adding lengths along paths it
defines a metric
$\delta_i:([n]\backslash\{i\})\times([n]\backslash\{i\})\to\R_{\ge
  0}$.  We call the $n$-tuple $T$ of metric trees a \emph{metric tree
  arrangement} if
\begin{equation}\label{eq:metric_tree}
  \delta_i(j,k) \ = \ \delta_j(k,i) \ = \ \delta_k(i,j)
\end{equation}
for all $i,j,k\in[n]$ pairwise distinct.  Moreover, considering 
trees $T_i$ without metrics, but with leaves
still labeled by $[n]\backslash \{i\}$,
we say that $T$ 
is an \emph{abstract tree arrangement} if 
\begin{itemize}
\item either $n=4$;
\item or $n=5$, and $T$ is the set of quartets of a tree with five leaves;
\item or $n \ge 6$, and $(T_1\backslash i,\dots,T_{i-1}\backslash
  i,T_{i+1}\backslash i,\dots,T_n\backslash i)$ is an arrangement of
  $n-1$ trees for each $i\in[n]$.
\end{itemize}
Here $T_j\backslash i$ denotes the tree on $[n]\backslash\{i,j\}$
gotten by deleting leaf $i$ from tree $T_j$. 
A \emph{quartet} of a tree is a subtree induced by four of its leaves.

The following result relates the two concepts of tree arrangements we
introduced:

\begin{prop}
  Each metric tree arrangement gives rise to an abstract tree
  arrangement by ignoring the edge lengths. The converse is not true:
  for $n \geq 9$, there exist abstract arrangements of $n$ trees that
  do not support any metric tree arrangement.
\end{prop}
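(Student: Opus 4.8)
The plan is to prove the two halves separately. The first (easy) direction asks that any metric tree arrangement yields an abstract one after forgetting edge lengths. I would argue by induction on $n$. The base cases $n=4$ and $n=5$ are immediate from the definition: for $n=4$ there is nothing to check, and for $n=5$ the compatibility equations \eqref{eq:metric_tree} force the five metric trees to be the quartets of a single metric tree on five leaves, so forgetting lengths gives exactly the quartet set required. For the inductive step with $n\ge 6$, I would fix $i\in[n]$ and consider the deletions $T_j\backslash i$ for $j\neq i$. Deleting a leaf from a metric tree again gives a metric tree, and the compatibility relations \eqref{eq:metric_tree} restricted to indices in $[n]\backslash\{i\}$ are inherited by the deleted trees; so $(T_j\backslash i)_{j\neq i}$ is a metric tree arrangement on $n-1$ trees. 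By the inductive hypothesis it is an abstract arrangement, and since this holds for every $i$, the original $T$ satisfies the recursive condition defining an abstract arrangement.

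The second (hard) direction requires exhibiting, for each $n\ge 9$, a specific abstract tree arrangement that cannot be realized metrically. The natural strategy is to produce a single explicit counterexample for $n=9$ and then show that the obstruction persists for all larger $n$. To build the $n=9$ example I would start from a combinatorial configuration whose metric realizability fails for a genuinely geometric reason: the most economical source is a non-realizable matroid or point configuration, connecting back to the theme of the paper. Concretely, I would take a configuration of nine points whose line structure encodes a classical non-realizability phenomenon (for instance one coming from the rank-$3$ setting underlying $\Dr(3,9)$, where abstract combinatorial data outruns metric/realizable data), translate each of the nine ``deletion'' pictures into an abstract tree on eight leaves, and verify directly that these eight-leaf trees form an abstract arrangement by checking the recursive condition down to the $n=5$ quartet level.

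The main obstacle is the non-realizability argument itself: I must show no assignment of non-negative edge lengths can simultaneously satisfy all the three-term compatibility equations \eqref{eq:metric_tree}. Here I would set up the lengths as unknowns and observe that \eqref{eq:metric_tree} imposes a linear system on the pairwise distances $\delta_i(j,k)$; abstract compatibility guarantees the combinatorial tree shapes are mutually consistent, but the metric constraint is an overdetermined system of linear equations together with the tree-metric (four-point) inequalities. The crux is to locate a small sub-configuration -- I expect a cycle of identifications forced by \eqref{eq:metric_tree} around a suitable $6$- or $7$-element subset -- along which the forced equalities contradict the strict four-point condition that each $T_i$ must be a genuine tree metric. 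In other words, abstract compatibility can be satisfied purely combinatorially, but the real-number system it induces has no solution in which every $T_i$ is metrically a tree.

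Finally, to extend from $n=9$ to all $n\ge 9$ I would embed the counterexample: given a non-metrizable abstract arrangement on $9$ trees, add the extra leaves $10,\dots,n$ as pendant leaves attached in a combinatorially consistent way, so that deleting them recovers the $9$-leaf obstruction. Since a metric realization of the larger arrangement would restrict (via the deletions $T_j\backslash i$, which preserve metric realizability) to a metric realization of the embedded $9$-tree arrangement, the impossibility propagates upward. The one point requiring care is checking that the enlarged data is still a \emph{bona fide} abstract arrangement for every $i$, including the newly added indices; this is a routine but not entirely trivial verification of the recursive defining condition.
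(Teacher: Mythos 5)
Your first half is essentially sound and close to the paper's (the paper simply cites the Four Point Condition, \cite[Theorem 2.36]{ASCB}); note only that your base case $n=5$ is not ``immediate from the definition'' --- the claim that five pairwise compatible metric quartets amalgamate into a single tree on five leaves \emph{is} the Four Point Condition at work, so you are hiding the one nontrivial step of this direction inside the word ``immediate.'' The inductive step (deletion preserves metric arrangements and the compatibility \eqref{eq:metric_tree}) is fine.

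The second half has a genuine gap: your proposed source of the counterexample cannot work. You suggest starting from ``a non-realizable matroid or point configuration'' on nine points. But matroid non-realizability over a field is the wrong obstruction here: by Corollary~\ref{cor:matroid}, \emph{every} connected rank-$3$ matroid on $[n]$ (Fano, non-Pappus, etc.) yields a vector in $\Dr(3,n)$, hence a \emph{regular} matroid subdivision of $\Delta(3,n)$, and hence by Lemma~\ref{lem:arrangement} a tree arrangement that \emph{does} support a metric realization. Non-realizability of a matroid separates the Grassmannian $\Gr(3,n)$ from the Dressian $\Dr(3,n)$ (realizable versus non-realizable tropical planes), whereas the dichotomy in this proposition --- metric versus merely abstract tree arrangements --- is governed by a different property: \emph{regularity} of the induced matroid subdivision. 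This is what the paper exploits: it takes a non-regular lozenge tiling of $6\Delta_2$ (borrowed from Santos \cite{S}), reads off an abstract arrangement of nine trees via the mechanism of Proposition~\ref{prop:KK} (Table~\ref{tab:paco}, Figure~\ref{fig:paco}), and concludes from Theorem~\ref{thm:arrangement} --- metric arrangements correspond exactly to regular matroid subdivisions --- that this arrangement admits no metric realization (Example~\ref{exa:paco}). Your ``crux'' step, a cycle of forced equalities contradicting the four point condition, is indeed the shape of a hands-on non-regularity argument, but since the class of examples you propose to draw from provably yields only metrically realizable arrangements, the construction never gets off the ground. (Your extension from $n=9$ to all $n\geq 9$ by attaching pendant leaves is fine, and is the part the paper leaves implicit.)
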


\begin{proof} The first assertion follows from the Four Point Condition; see
  \cite[Theorem 2.36]{ASCB}. An example establishing the second assertion is
  the abstract arrangement of nine trees listed in Table \ref{tab:paco} and
  depicted in Figure \ref{fig:paco}: Three of the trees (numbered $1,2,3$) are
  on the boundary, while the six remaining trees (numbered $4,5,6,7,8,9$)
  partition the dual graph of the subdivision of the big triangle into
  quadrangles and small triangles.  Each intersection of the tree $T_a$ with
  the tree $T_b$ in one of the quadrangles defines a leaf labeled $b$ in $T_a$
  and, symmetrically, a leaf labeled $a$ in $T_b$.  See Example \ref{exa:paco}
  below for more details, including an argument why this abstract arrangement
  cannot be realized as a metric arrangement.
\end{proof}

\begin{figure}[hbt]\centering
  \includegraphics[scale=0.95]{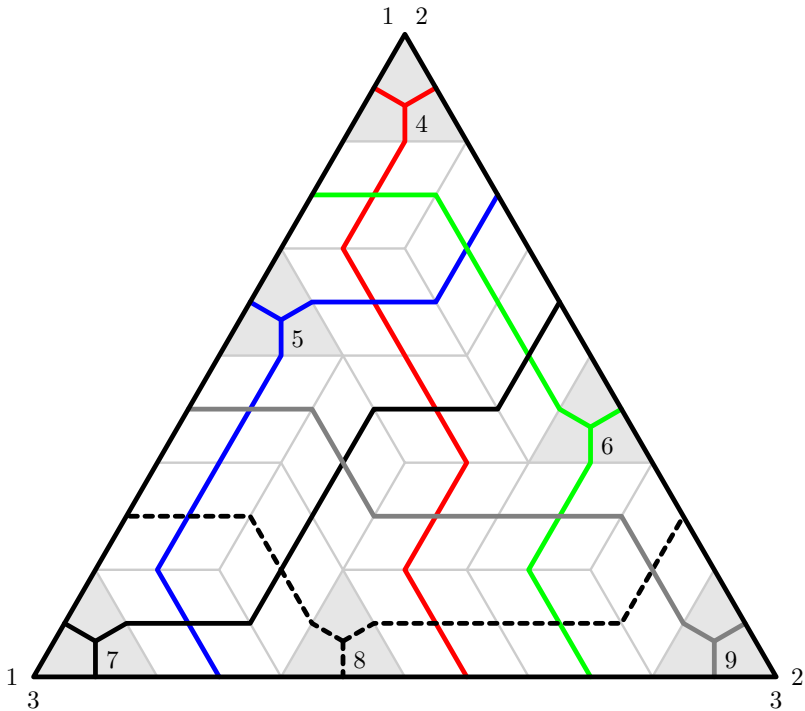}
  \caption{Abstract arrangement of nine caterpillar trees on eight
    leaves encoding a matroid subdivision of $\Delta(3,9)$ which is
    not regular; see Table~\ref{tab:paco}.}
  \label{fig:paco}
\end{figure}

The hypersimplex $\Delta(d,n)$ is the intersection of the unit cube
$[0,1]^n$ with the affine hyperplane $\sum x_i=d$.  From this it
follows that the facets of $\Delta(d,n)$ correspond to the facets of
$[0,1]^n$.  We call the facet defined by $x_i=0$ the \emph{$i$-th
  deletion facet} of $\Delta(d,n)$, and the facet defined by $x_i=1$
the \emph{$i$-th contraction facet}.  These names come about as
follows: If $\cM$ is a matroid on $[n]$ of rank $d$ then the
intersection of $\MatroidPolytope{\cM}$ with the $i$-th deletion
(contraction) facet is the matroid polytope of the matroid obtained by
deleting (contracting)~$i$.  Each deletion facet of $\Delta(d,n)$ is
isomorphic to $\Delta(d,n-1)$, and each contraction facet is
isomorphic to $\Delta(d-1,n-1)$.  We use the terms ``deletion'' and
``contraction'' also for matroid subdivisions and for vectors in
$\R^{\tbinom{[n]}{d}}$.  Notice that trees come naturally into the
game since a polytopal subdivision of $\Delta(2,n-1)$ is a matroid
subdivision if and only if it is dual to a tree.

\begin{lem}\label{lem:arrangement}
  Each matroid subdivision $\Sigma$ of $\Delta(3,n)$ defines
  an abstract arrangement $T(\Sigma)$ of $n$ trees.  Moreover, if
  $\Sigma$ is regular then $T(\Sigma)$ supports a metric
tree arrangement.
\end{lem}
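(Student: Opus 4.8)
The plan is to prove both assertions by induction on $n$, with the tree $T_i$ defined as the tree dual to the restriction of $\Sigma$ to the $i$-th contraction facet. Recall that the $i$-th contraction facet $\{x_i=1\}\cap\Delta(3,n)$ is isomorphic to $\Delta(2,n-1)$, and that, since every face of a matroid polytope is again a matroid polytope \cite{FS}, the restriction $\Sigma|_{\{x_i=1\}}$ is a matroid subdivision of this $\Delta(2,n-1)$. A matroid subdivision of $\Delta(2,n-1)$ is dual to a tree with leaves $[n]\setminus\{i\}$, and I take this to be $T_i$. This defines the $n$-tuple $T(\Sigma)=(T_1,\dots,T_n)$.

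For the first assertion I would verify the defining recursion for abstract arrangements. The base case $n=4$ is immediate since every arrangement of four trees is abstract; for $n=5$ one uses the complementation isomorphism $\Delta(3,5)\cong\Delta(2,5)$ to present $\Sigma$ as the subdivision dual to a single tree $\tau$ on five leaves, and then identifies $T_i$ with the quartet $\tau\setminus i$. The inductive step rests on a single commuting-restriction observation: deleting leaf $i$ from the tree $T_j$ corresponds to restricting $\Sigma|_{\{x_j=1\}}$ to its deletion facet $\{x_i=0\}$, while the $j$-th tree of the arrangement attached to the deletion facet $\{x_i=0\}\cap\Delta(3,n)\cong\Delta(3,n-1)$ is dual to the restriction of $\Sigma|_{\{x_i=0\}}$ to $\{x_j=1\}$. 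Both trees are therefore dual to the restriction of $\Sigma$ to the codimension-two face $\{x_i=0,\,x_j=1\}$, so they coincide. Hence the tuple obtained by deleting leaf $i$ from each $T_j$ (omitting $T_i$) is exactly the arrangement $T(\Sigma|_{\{x_i=0\}})$, which is an abstract arrangement of $n-1$ trees by the induction hypothesis. As this holds for every $i$, the tuple $T(\Sigma)$ is an abstract tree arrangement.

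For the second assertion, suppose $\Sigma$ is regular, induced by a weight vector $\lambda\in\R^{\binom{[n]}{3}}$; by Proposition~\ref{prop:Speyer} we may regard $\lambda$ as a point of $\Dr(3,n)$. Restricting $\lambda$ to the vertices of the $i$-th contraction facet gives the weights $(\lambda_{ijk})_{j,k}$, which induce the tree $T_i$, now to be equipped with a metric. The \textbf{main obstacle} is the compatibility condition (\ref{eq:metric_tree}): a priori each $\delta_i$ is determined only up to node potentials $\delta_i(j,k)\mapsto\delta_i(j,k)+g_i(j)+g_i(k)$, and these must be pinned down consistently across all $n$ trees. The key point, which follows from a short computation with the potentials, is that (\ref{eq:metric_tree}) holds precisely when $\delta_i(j,k)$ depends only on the set $\{i,j,k\}$; this singles out the uniform and manifestly symmetric choice $\delta_i(j,k):=C-2\lambda_{ijk}$. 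I would then check two things. First, for each fixed $i$ this function satisfies the four-point condition and realizes the tree $T_i$: the four-point sums $\delta_i(j,k)+\delta_i(l,m)$ are, up to the global constant, the negatives of the terms $\lambda_{ijk}+\lambda_{ilm}$ appearing in the three-term Pl\"ucker relation (\ref{eq:3term}) with $S=\{i\}$, whose minimum is attained at least twice because $\lambda\in\Dr(3,n)$; moreover the split selected by the strictly smaller sum is exactly the split of the corresponding quadrilateral in $\Sigma|_{\{x_i=1\}}$, so the resulting tree is $T_i$. Second, the edge lengths are non-negative: the internal edge lengths are independent of $C$ and are non-negative by the four-point condition, while increasing $C$ lengthens every pendant edge, so a sufficiently large $C$ makes all lengths non-negative simultaneously for the finitely many trees $T_1,\dots,T_n$. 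Since $\delta_i(j,k)=C-2\lambda_{ijk}$ is symmetric in $i,j,k$, condition (\ref{eq:metric_tree}) holds by construction, and $T(\Sigma)$ supports a metric tree arrangement.
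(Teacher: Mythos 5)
Your proof is correct, and it follows the same basic strategy as the paper---defining $T_i$ as the tree dual to the subdivision induced on the $i$-th contraction facet $\Delta(2,n-1)$---but your execution differs in two substantive ways, both of which add content the paper leaves implicit. First, for the abstract-arrangement property the paper simply asserts that the $n$ facet trees form a tree arrangement, whereas you verify the recursive axiom honestly via the commuting-restriction identity: deleting leaf $i$ from $T_j$ and taking the $j$-th tree of $T(\Sigma|_{\{x_i=0\}})$ both yield the tree dual to $\Sigma$ restricted to the codimension-two face $\{x_i=0,\,x_j=1\}$, so the deletions assemble into the arrangement of the deletion facet and induction applies. Second, for the metric statement the paper normalizes $\lambda$ so that its restriction to each contraction facet takes values in $[1,2]$, treats this restriction as the desired metric, and invokes the Split Decomposition Theorem of Bandelt and Dress \cite{BD} to read off edge lengths; you instead set $\delta_i(j,k)=C-2\lambda_{ijk}$ for a large constant $C$ and check the four-point condition and the quartet-split matching directly against the three-term relations (\ref{eq:3term}). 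Your sign flip is not cosmetic: with the paper's min-convention, a weight inducing the split $jk|lm$ on an octahedral face satisfies $\lambda_{ijk}+\lambda_{ilm}>\lambda_{ijl}+\lambda_{ikm}=\lambda_{ijm}+\lambda_{ikl}$, so the restricted weight itself attains its four-point \emph{maximum} only once and is not a tree metric realizing $T_i$; the tree metric for $T_i$ is an affine image of $-\lambda$, which is exactly your choice, so on this convention issue your argument is, if anything, more careful than the paper's sketch. The large-$C$ argument correctly handles non-negativity of pendant edges, while internal lengths are $C$-independent and non-negative by the four-point condition. The only loose phrase is the claim that (\ref{eq:metric_tree}) ``singles out'' your choice: any function symmetric in the set $\{i,j,k\}$ satisfies (\ref{eq:metric_tree}), and what distinguishes $C-2\lambda_{ijk}$ is the subsequent verification that it realizes the trees $T_i$---but since you carry out that verification, this does not affect correctness.
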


\begin{proof}
  Each of the $n$ contraction facets of $\Delta(3,n)$ is isomorphic to
  $\Delta(2,n-1)$, and thus $\Sigma$ induces matroid subdivisions on
  $n$ copies of $\Delta(2,n-1)$.  But the matroid subdivisions of
  $\Delta(2,n-1)$ are generated by compatible systems of splits, see
  \cite[Theorem 3.4]{SS} and \cite[\S6]{HJ}.  These matroid
  subdivisions are dual to trees, and hence $\Sigma$ gives rise to a
  tree arrangement.

  Now let $\Sigma$ be regular with weight function $\lambda$. By
  adding or subtracting a suitable multiple of $(1,1,\ldots,1)$ and
  subsequent rescaling we can assume that $\lambda$ attains values
  between $1$ and $2$ only.  The function $\lambda$ can be restricted
  to a weight function on each contraction facet.  But a weight
  function on $\Delta(2,n-1)$ which takes values between $1$ and $2$
  is a metric.  Since the induced regular subdivisions of the facets
  of $\Delta(3,n)$ isomorphic to $\Delta(2,n-1)$ are also regular
  matroid subdivisions, they are dual to metric trees with $n-1$
  leaves.  The Split Decomposition Theorem of Bandelt and
  Dress~\cite[Theorem~2]{BD} allows to read off the lengths on all
  edges of these trees; see also \cite[Theorem~3.10]{HJ}.
\end{proof}

\begin{prop}\label{prop:bar_sigma}
  Let $\Sigma$ and $\bar \Sigma$ be two matroid subdivisions of
  $\Delta(3,n)$ such that $\Sigma$ refines $\bar\Sigma$.  If $\Sigma$
  and $\bar\Sigma$ induce the same subdivision on the boundary of
  $\Delta(3,n)$ then $\Sigma$ and $\bar\Sigma$ coincide.
\end{prop}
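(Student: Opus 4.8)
The plan is to localize the claim to a single cell of $\bar\Sigma$ and then to exploit the fact that a rank-$3$ matroid is reconstructible from its single-element minors, which live on the coordinate facets of $\Delta(3,n)$. First I would suppose $\Sigma\neq\bar\Sigma$ and, using that $\Sigma$ refines $\bar\Sigma$, pick a maximal cell $\bar P=\MatroidPolytope{M}$ of $\bar\Sigma$ on which $\Sigma$ induces a nontrivial matroid subdivision; here $M$ is a connected matroid of rank $3$ on $[n]$. The restriction $\Sigma|_{\bar P}$ is then a nontrivial matroid subdivision of $\bar P$. The key initial observation is that it is trivial on every coordinate facet of $\bar P$: the facets $\bar P\cap\{x_i=0\}=\MatroidPolytope{M\setminus i}$ and $\bar P\cap\{x_i=1\}=\MatroidPolytope{M/i}$ all lie in the boundary of $\Delta(3,n)$, where $\Sigma$ and $\bar\Sigma$ agree by hypothesis, and there $\bar\Sigma|_{\bar P}$ is the single cell $\bar P$; hence $\Sigma$ induces the trivial subdivision on each $\MatroidPolytope{M\setminus i}$ and each $\MatroidPolytope{M/i}$. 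Thus everything reduces to the rigidity statement that a matroid subdivision of $\MatroidPolytope{M}$ that is trivial on all coordinate facets must itself be trivial.

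To prove this rigidity I would use the reconstruction of $M$ from its coordinate-facet minors. Let $\MatroidPolytope{M'}\subseteq\MatroidPolytope{M}$ be any maximal cell of $\Sigma|_{\bar P}$; every basis of $M'$ is a basis of $M$, so $M'$ is a weak image of $M$. For each $i$ for which $\MatroidPolytope{M'}$ has a facet on the contraction facet $\{x_i=1\}$, the cell $\MatroidPolytope{M'/i}=\MatroidPolytope{M'}\cap\{x_i=1\}$ is a maximal cell of the induced subdivision on $\MatroidPolytope{M/i}$; since that induced subdivision is trivial, $\MatroidPolytope{M'/i}=\MatroidPolytope{M/i}$ and hence $M'/i=M/i$. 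The analogous argument on the deletion facets gives $M'\setminus i=M\setminus i$ whenever $\MatroidPolytope{M'}$ has a facet on $\{x_i=0\}$. A connected rank-$3$ matroid is determined by the collinearity data recorded in its contractions $M/i$ (two points are collinear with $i$ precisely when they are parallel in $M/i$), so once enough of these minors are pinned down the only possibility is $M'=M$, forcing $\MatroidPolytope{M'}=\MatroidPolytope{M}$ and contradicting nontriviality. Throughout I would use the facet description of matroid polytopes in \cite{FS}, so that the walls of $\Sigma|_{\bar P}$ lie on the split hyperplanes $\{x_A=\mu\}$ of $\Delta(3,n)$, and the classification of splits of the hypersimplex in \cite{HJ} to control how such a hyperplane meets the coordinate facets.

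The hard part will be exactly the reconstruction step, because a nontrivial subdivision need not leave any trace on the boundary in general: the two triangulations of a square both restrict to the trivial subdivision of its edges, so the conclusion genuinely fails without the matroid hypothesis. The obstacle is that an interior wall $W$ of $\Sigma|_{\bar P}$ may lie in the relative interior of $\bar P$ and fail to reach a coordinate facet, so that the split hyperplane carrying $W$ restricts nontrivially to a facet without $\Sigma$ actually subdividing that facet. Overcoming this requires showing that for a connected rank-$3$ matroid the coordinate minors $M/i$ and $M\setminus i$ that \emph{are} visible on facets of a proper subcell $\MatroidPolytope{M'}$ already determine $M'$; here connectivity of $M$ is essential, since it guarantees that $\MatroidPolytope{M'}$ meets enough coordinate facets, and the rank-$3$ hypothesis is what lets collinearities be read off from rank-$2$ contractions. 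This is the point where the combinatorics of Lemma~\ref{lem:arrangement} --- the passage between matroid subdivisions of $\Delta(3,n)$ and tree arrangements on the contraction facets --- does the real work, and I expect the cleanest completion to phrase the reconstruction in the language of the induced trees.
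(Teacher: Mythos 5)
Your localization step is sound and agrees with how the statement is used: the coordinate faces $\bar P\cap\{x_i=0\}$ and $\bar P\cap\{x_i=1\}$ of a maximal cell $\bar P$ of $\bar\Sigma$ do lie in $\partial\Delta(3,n)$, so the boundary hypothesis forces $\Sigma$ to be trivial on them, and the proposition would follow from your rigidity claim. But the proposal then rests entirely on that claim, and you never prove it --- as you acknowledge in the last paragraph. The sentence ``once enough of these minors are pinned down the only possibility is $M'=M$'' is exactly where the argument fails to exist: the equalities $M'/i=M/i$ and $M'\setminus i=M\setminus i$ are available only for those $i$ where $\MatroidPolytope{M'}$ has a facet on a coordinate hyperplane, and you give no argument that such $i$ occur in sufficient number, nor that the visible minors determine $M'$ among weak images of $M$; the appeal to connectivity and to ``the language of the induced trees'' is a hope, not a mechanism. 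So what you have is a correct reduction followed by an unproven assertion that carries the entire content of Proposition~\ref{prop:bar_sigma} --- your own square-versus-its-edges example shows the assertion is genuinely matroid-theoretic and cannot be waved through.

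For comparison, the paper closes precisely this gap, and by a quite different and more local device: it never reconstructs a maximal cell, but instead shows that \emph{every} interior wall leaves a visible trace on the boundary. Take a codimension-one cell $F$ of $\Sigma$ that is not a cell of $\bar\Sigma$, lying inside a maximal cell $\bar F$ of $\bar\Sigma$. The matroid of $F$ splits as $\cM_1\cup\cM_2$ with $\rank\cM_1=1$ on a set $I$ and $\rank\cM_2=2$, so the affine hull of $F$ is $H=\{\sum_{i\in I}x_i=1\}$. Since $H$ cuts $\bar F$, a basis-exchange argument in the matroid of $\bar F$ produces vertices $e_{125}$ and $e_{345}$ of $\bar F$ on opposite sides of $H$ with $1,2\in I$ and $3,4,5\notin I$; realizing $\cM$ as a planar point configuration (possible since both summands have rank at most $2$; see Figure~\ref{fig:matroid}) shows that $S=\conv\{e_{135},e_{145},e_{235},e_{245}\}$ is a square with $S=F\cap C$, where $C=\Delta(3,n)\cap\{x_5=1,\,x_6=\cdots=x_n=0\}$ is an octahedron in $\partial\Delta(3,n)$. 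All six vertices of $C$ lie in $\bar F$, so $C$ is a cell of the boundary subdivision induced by $\bar\Sigma$, while $\Sigma$ cuts $C$ along $S$ --- contradicting the hypothesis directly, with no reconstruction of matroids from minors anywhere. If you want to rescue your route, the statement you must first prove is exactly this visibility lemma (an interior wall of a matroid subdivision of $\Delta(3,n)$ always subdivides some octahedral $3$-face of the boundary); as written, your proposal stops at the point where the actual work of the paper's proof begins.
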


\begin{proof}
  Suppose that $\Sigma$ strictly refines $\bar\Sigma$.  Then there is
  a codimension-$1$-cell $F$ of $\Sigma$ which is not a cell in
  $\bar\Sigma$.  Let $\bar F$ be the unique full-dimensional cell of
  $\bar \Sigma$ that contains $F$.  In particular, $F$ is not
  contained in the boundary of $\Delta(3,n)$.  Then $F$ is a
  rank-$3$-matroid polytope $F=\MatroidPolytope{\cM}$ of codimension
  $1$ where $\cM=\cM_1\cup\cM_2$ is the disjoint union of a
  rank-$1$-matroid $\cM_1$ and a rank-$2$-matroid $\cM_2$.  In
  particular,
  $F\cong\MatroidPolytope{\cM_1}\times\MatroidPolytope{\cM_2}$.
  Notice that the affine hull $H$ of $F$ is defined by the equation
  $\sum_{i\in I} x_i=1$ where we denote by $I$ the set of elements of
  $\cM_1$, all of which are parallel because of $\rank\cM_1=1$.

  Since $\bar F$ is subdivided by $H$ there exist vertices $v,w$ of
  $\bar F$ on either side of $H$.  Now $\bar F$ is also a matroid
  polytope of some matroid $\bar\cM$ containing $\cM$ as a submatroid.
  Up to relabeling we can assume that $v=e_{12i}$ and $w=e_{345}$ such
  that $\{1,2,i\}$ and $\{3,4,5\}$ are bases of $\bar\cM$ which are
  not bases of $\cM$ and where $1,2\in I$ and $i,3,4,5\not \in I$.  If
  $i\notin\{3,4,5\}$ we can exchange $i$ in the basis $\{1,2,i\}$ by
  some $j\in\{3,4,5\}$ to obtain a new basis of $\bar\cM$.  Without
  loss of generality we can assume that $i=5$ or $j=5$.  Hence
  $\{1,2,5\}$ and $\{3,4,5\}$ are bases of $\bar\cM$ that are not
  bases of $\cM$.  Notice that $e_{125}$ and $e_{345}$ are still on
  different sides of $H$ as $e_{12i}$ and $e_{125}$ are connected by
  an edge and $\{1,2,5\}$ is not a basis of $\cM$.
  
  Now, as $\rank\cM_i \le 2$, both $\cM_1$ and $\cM_2$ are realizable
  as affine point configurations over~$\R$, and hence so is $\cM$.  In
  the sequel we identify $\cM$ with a suitable point configuration
  (with multiple points).  This way we obtain a subconfiguration of
  five points in $\cM$ which looks like one of the two configurations
  shown in Figure~\ref{fig:matroid}.
  
  \begin{figure}[htb]
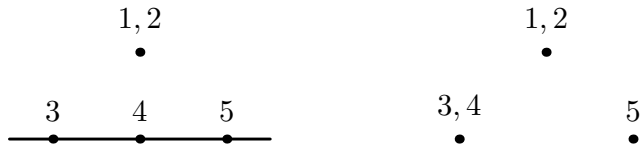
\centering
    \includegraphics[scale=1.156]{matroid.0}
    \hskip 2cm
    \includegraphics[scale=1.156]{matroid.1}
    \caption{Two point configurations in the Euclidean plane.}
    \label{fig:matroid}
  \end{figure}

  Consider the intersection of $\Delta(3,n)$ with the affine space
  defined by $x_5=1$ and $x_6=x_7=\dots=x_n=0$.  This gives us an
  octahedron $C\cong\Delta(2,4)$ in the boundary of $\Delta(3,n)$.
  The intersection $S=F\cap C$ is a square; it can be read off
  Figure~\ref{fig:matroid} as the convex hull of the four points
  $e_{135}$, $e_{145}$, $e_{235}$, and $e_{245}$. In particular, the
  square $S$ is a cell of $\Sigma$. However, since $e_{125}$ and
  $e_{345}$ are vertices of $\bar F=\MatroidPolytope{\bar\cM}$ as
  discussed above, $C$ is a cell of $\bar\Sigma$.  We conclude that
  the square $S$ is a cell of $\Sigma$ but not a cell of $\bar\Sigma$.
  By construction $S\subset C$ is contained in the boundary of
  $\Delta(3,n)$.  This yields the desired contradiction, as $\Sigma$
  and $\bar \Sigma$ induce the same subdivision on the boundary.
\end{proof}

Two metric tree arrangements are \emph{equivalent} if they induce the
same abstract tree arrangement.  The following is the main result of
this section.

\begin{thm}\label{thm:arrangement}
  The equivalence classes of arrangements of $n$ metric trees are in
  bijection with the regular matroid subdivisions of the hypersimplex
  $\Delta(3,n)$.  Moreover, the secondary fan structure on $\Dr(3,n)$
  coincides with the Pl\"ucker fan structure.
\end{thm}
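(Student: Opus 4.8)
The plan is to realise the bijection by pairing Lemma~\ref{lem:arrangement} (which reads a tree arrangement off a subdivision) with an explicit inverse, and then to prove the essential \emph{uniqueness} statement---that the underlying abstract arrangement already pins down the subdivision---by induction on $n$ with Proposition~\ref{prop:bar_sigma} as the engine. This one uniqueness fact simultaneously yields injectivity of the bijection and the coincidence of the two fan structures, so I would organise the whole proof around it.

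First I would build the inverse map. Given a metric tree arrangement $(T_1,\dots,T_n)$, the compatibility~(\ref{eq:metric_tree}) lets the tree metrics $\delta_i$ be assembled into a single weight vector $\lambda$ on $\tbinom{[n]}{3}$, with $\lambda_{ijk}$ determined by the common value $\delta_i(j,k)=\delta_j(k,i)=\delta_k(i,j)$. The decisive point is that for $d=3$ every three-term relation~(\ref{eq:3term}) is indexed by a singleton $S=\{m\}$ together with a quadruple $i,j,k,l$, and for this $\lambda$ the tropical relation is exactly the four point condition for the metric $\delta_m$ on the four leaves $i,j,k,l$ of $T_m$. Since each $T_m$ is a metric tree this condition holds, so $\lambda\in\Dr(3,n)$ and, by Proposition~\ref{prop:Speyer}, $\lambda$ induces a regular matroid subdivision $\Sigma$. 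The restriction of $\lambda$ to the $m$-th contraction facet is the tree metric of $T_m$, so $T(\Sigma)=(T_1,\dots,T_n)$; hence the forward map of Lemma~\ref{lem:arrangement} is onto.

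The heart of the argument is the converse: if two regular matroid subdivisions $\Sigma$ and $\bar\Sigma$ of $\Delta(3,n)$ carry the same abstract tree arrangement, then $\Sigma=\bar\Sigma$. I would induct on $n$, the cases $n=4,5$ being immediate (for $n=5$ one has $\Dr(3,5)=\Gr(3,5)$, the Petersen graph). The abstract arrangement fixes the subdivision on each contraction facet, since that is dual to the tree $T_m$, and it fixes the deletion arrangement on each deletion facet $\cong\Delta(3,n-1)$; by the inductive hypothesis the latter determines the subdivision there. Thus $\Sigma$ and $\bar\Sigma$ agree on the entire boundary of $\Delta(3,n)$. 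Now pass to the common refinement $\Sigma'=\Sigma\wedge\bar\Sigma$. The key observation is that weight vectors $\lambda,\bar\lambda$ realising $\Sigma,\bar\Sigma$ lie in one cone of the Pl\"ucker fan, because they assign the same initial form to every trinomial (they record the same quartet on every four leaves of every $T_m$); this cone is convex and contained in the Dressian, so $\lambda+\varepsilon\bar\lambda\in\Dr(3,n)$ for small $\varepsilon>0$ and the subdivision it induces, namely $\Sigma'$, is again a regular \emph{matroid} subdivision. Since $\Sigma'$ refines $\Sigma$ and agrees with it on the boundary, Proposition~\ref{prop:bar_sigma} forces $\Sigma'=\Sigma$; symmetrically $\Sigma'=\bar\Sigma$, so $\Sigma=\bar\Sigma$. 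This gives injectivity, completing the bijection.

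The same uniqueness statement delivers the equality of fan structures. The secondary fan refines the Pl\"ucker fan because a subdivision determines the initial form of each trinomial. Conversely, two vectors in one Pl\"ucker cone record identical quartets on all the trees $T_m$, hence share the same abstract arrangement, and the previous paragraph shows they induce the same matroid subdivision---that is, they lie in one secondary cone. Hence the two fan structures on $\Dr(3,n)$ coincide. I expect the main obstacle to be precisely the step ensuring that the common refinement $\Sigma'$ is a \emph{matroid} subdivision, so that Proposition~\ref{prop:bar_sigma} applies: this is where the convexity of a Pl\"ucker cone and its containment in the Dressian are indispensable, and it is exactly the feature special to the Pl\"ucker fan (as opposed to an arbitrary refinement of two secondary cones, whose common refinement need not stay within the matroid locus).
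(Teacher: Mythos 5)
Your proposal follows the paper's strategy quite closely: Lemma~\ref{lem:arrangement} for one direction; assembling the tree metrics into a weight vector, checking the three-term relations, and invoking Proposition~\ref{prop:Speyer} for the other; boundary agreement by induction on $n$; and Proposition~\ref{prop:bar_sigma} as the engine that prevents the secondary fan structure from being strictly finer than the Pl\"ucker fan structure, which yields injectivity and the fan coincidence in one stroke, exactly as in the paper. Two of your choices are mild variants: you verify the relations~(\ref{eq:3term}) directly as four point conditions for the individual trees $T_m$, where the paper argues by induction on deletions (your version is more direct; the paper's base case $n=5$ amounts to the same verification, and both treatments are equally brisk about the global sign convention --- with the min convention it is really the \emph{negated} tree metrics that satisfy the three-term relations); and you compare two subdivisions inside one Pl\"ucker cone via the perturbed weight $\lambda+\varepsilon\bar\lambda$, where the paper instead fixes a single weight on the boundary of the secondary cone $S(\Sigma)$ lying in the interior of the Pl\"ucker cone $P(\Sigma)$ and compares $\Sigma$ with its coarsening $\bar\Sigma$.

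There is, however, one glossed step in your uniqueness argument. You assert that the subdivision induced by $\lambda+\varepsilon\bar\lambda$ \emph{is} the common refinement $\Sigma\wedge\bar\Sigma$. What the standard secondary-fan facts give is only that, for small $\varepsilon>0$, the subdivision $\Sigma(\lambda+\varepsilon\bar\lambda)$ refines $\Sigma(\lambda)=\Sigma$; it is not automatic that it refines $\bar\Sigma$, nor that the common refinement of two regular subdivisions is induced by such a sum of weights (in general it need not even be a regular subdivision with vertex-spanned cells). Consequently ``symmetrically $\Sigma'=\bar\Sigma$'' does not follow as stated: one application of Proposition~\ref{prop:bar_sigma} gives $\Sigma(\lambda+\varepsilon\bar\lambda)=\Sigma$, and the role-exchanged application gives $\Sigma(\bar\lambda+\varepsilon'\lambda)=\bar\Sigma$, but nothing yet links these two subdivisions. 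The repair is easy and is in effect what the paper does: the whole segment $[\lambda,\bar\lambda]$ lies in one convex Pl\"ucker cone, hence in $\Dr(3,n)$, so every point $\mu_t$ on it induces a matroid subdivision with the same boundary restriction (by convexity of the relatively open secondary cones of the facet subdivisions); near any $t_0$ the subdivision $\Sigma(\mu_t)$ refines $\Sigma(\mu_{t_0})$, so Proposition~\ref{prop:bar_sigma} makes $t\mapsto\Sigma(\mu_t)$ locally constant, hence constant, and $\Sigma=\bar\Sigma$. The paper's choice of a weight in $\partial S(\Sigma)\cap\operatorname{int}P(\Sigma)$ is precisely this crossing-point comparison, arranged so that a single application of Proposition~\ref{prop:bar_sigma} suffices; with that adjustment your argument coincides with the paper's proof.
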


\begin{proof}
  Each regular matroid subdivision defines a metric tree arrangement
  by Lemma~\ref{lem:arrangement}. The harder direction is to show that
  each metric tree arrangement gives rise to a regular matroid
  subdivision.  We will prove this by induction on~$n$.  The
  hypersimplex $\Delta(3,4)$ is a $3$-simplex without any non-trivial
  subdivisions, and $\Dr(3,4)$ is a single point corresponding to the
  unique equivalence class of metric trees.  The hypersimplex
  $\Delta(3,5)$ is isomorphic to $\Delta(2,5)$, and
  $\Dr(3,5) = \Gr(3,5)\cong\Gr(2,5)$ is isomorphic to the Petersen
  graph (considered as a one-dimensional polytopal complex).  Also in
  this case the result can be verified directly. This establishes the
  basis of our induction, and we now assume $n\ge 6$.

  Let $T$ be an arrangement of $n$ metric trees with tree metrics
  $\delta_1,\delta_2,\dots,\delta_n$.  In view of the axiom
  \eqref{eq:metric_tree}, the following map 
  $\pi:[n]^3\to\R\cup\{\infty\}$ is well-defined:
  \[
  \pi(i,j,k) \ = \
  \begin{cases}
    \delta_i(j,k)=\delta_j(k,i)=\delta_k(i,j) & \text{if $i,j,k$ are
      pairwise distinct,} \\
    \,\,\,\, \infty & \text{otherwise.}
  \end{cases}
  \]
  In order to show that $\pi$ is a tropical
  Pl\"ucker vector we have to verify that the minimum
  \[
  \min\bigl\{\pi_{hij}+\pi_{hkl}, \, \pi_{hik}+\pi_{hjl}, \, \pi_{hil}+\pi_{hjk}\bigr\}
  \]
  is attained at least twice, for any pairwise distinct
  $h,i,j,k,l\in[n]$.  Now, since $n\ge 6$, each $5$-tuple in $[n]$ is
  already contained in some deletion, and hence the desired property
  is satisfied by induction. We conclude that the restriction of the
  map $\pi$ to increasing triples $i<j<k$ is a finite tropical
  Pl\"ucker vector, that is, it is an element of $\Dr(3,n)$.  By
  Proposition~\ref{prop:Speyer} the map $\pi$ defines a matroid
  subdivision $\Sigma(T)$ of $\Delta(3,n)$.

  Consider any metric tree arrangement $T'$ that is equivalent to $T$.
  The maps $\pi$ and $\pi'$ associated with $T$ and $T'$ respectively
  clearly lie in the same cone of the Pl\"ucker fan structure on
  $\Dr(3,n)$. What we must prove is that they are also in the same
  cone of the secondary fan structure on $\Dr(3,n)$. Equivalently, we
  must show that $\Sigma(T')=\Sigma(T)$.  

  Suppose the secondary fan structure on $\Dr(3,n)$ is strictly finer
  than the Pl\"ucker fan structure.  Then there is a regular matroid
  subdivision $\Sigma$ of $\Delta(3,n)$ whose secondary cone
  $S(\Sigma)$ is strictly contained in the corresponding cone
  $P(\Sigma)$ of tropical Pl\"ucker vectors.  We fix a weight function
  $\lambda$ in the boundary of $S(\Sigma)$ which is contained in the
  interior of $P(\Sigma)$.  The matroid subdivision induced by
  $\lambda$ is denoted by $\bar\Sigma$.  By construction $\Sigma$
  strictly refines $\bar\Sigma$, and by induction we can assume that
  $\Sigma$ and $\bar\Sigma$ induce the same subdivision on the entire
  boundary of $\Delta(3,n)$.  Due to Proposition~\ref{prop:bar_sigma}
  we have that $\Sigma=\bar\Sigma$, and this completes our proof.
\end{proof}

We saw in Proposition~\ref{prop:K} that each regular subdivision of
$\Delta_{2}\times\Delta_{n-4}$ is induced by a regular matroid
subdivision of $\Delta(3,n)$. This implies that $\Dr(3,n)$ contains a
distinguished $(2n-9)$-dimensional sphere, dual to the secondary
polytope of $\Delta_{2}\times\Delta_{n-4}$, which parameterizes all
arrangements of $n-3$ lines in the tropical plane $\TP^2$. It has the
following nice description in terms of tree arrangements.  Let $L_1,
L_2, \ldots, L_{n-3}$ be the $n-3$ lines and let $L_x, L_y, L_z$
denote the three boundary lines of $\TP^2$.  Each of these $n$ lines
translates into a tree. The tree for $L_x$ is obtained by branching
off the leaves $\{1,2,\ldots,n-3\}$ on the path between leaves $y$ and
$z$, in the order in which the $L_j$ intersect $L_x$.  The trees for
$L_y$ and $L_z$ are analogous.  The tree for $L_i$ has one
distinguished node with long branches to the three special leaves $x$,
$y$ and $z$.  Along the path from the distinguished node to leaf $x$
we branch off additional leaves $j$ for each line $L_j$ that
intersects the line $L_i$ in its $x$-halfray.  This branching takes
place in the order in which the lines $L_j$ intersect $L_j$.  In this
manner, every arrangement of $n-3$ lines in $\TP^2$ translates into an
arrangement of $n$ trees.

The same construction also applies to arrangements of $n-3$ tropical
pseudolines in $\TP^2$ as defined by Ardila and Develin~\cite{AD}.  We
shall now describe this in terms of lozenge tilings as in \cite{S}.
Let $\Sigma$ be any polytopal subdivision of
$\Delta_{2}\times\Delta_{n-4}$.  The Cayley Trick encodes $\Sigma$ as
a mixed subdivision $M(\Sigma)$ of $(n-3)\Delta_{2}$, a regular
triangle of side length $n-3$.  By \cite[Theorem~3.5]{S} the mixed
subdivisions of dilated triangles are characterized as those polygonal
subdivisions whose cells are tiled by lozenges and upward triangles
(with unit edge lengths).  Here a \emph{lozenge} is a parallelogram
which is the union of one upward triangle and one downward triangle.
We call a mixed cell \emph{even} if it can be tiled by lozenges only.
Those which need an upward triangle in any tiling are \emph{odd}.  A
counting argument now reveals that each mixed subdivision of
$(n-3)\Delta_{2}$ contains up to $n-3$ odd polygonal cells.

\begin{prop}\label{prop:KK}
  Each polytopal subdivision $\Sigma$ of
  $\Delta_{2}\times\Delta_{n-4}$, or each mixed subdivision
  $M(\Sigma)$ of the triangle $(n-3)\Delta_{2}$, determines an
  abstract arrangement $T(\Sigma)$ of $n$ trees.
\end{prop}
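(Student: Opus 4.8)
The plan is to reduce Proposition~\ref{prop:KK} to Lemma~\ref{lem:arrangement}. The construction there of an abstract tree arrangement from a matroid subdivision of $\Delta(3,n)$ is purely combinatorial: the tree $T_i$ is read off the subdivision induced on the $i$-th contraction facet $\cong\Delta(2,n-1)$, which is automatically a matroid subdivision and hence dual to a tree by the $d=2$ theory \cite[Theorem~3.4]{SS}. Regularity is never used to obtain the \emph{abstract} arrangement; it enters only when one wants to equip the trees with metrics. Hence it suffices to produce, from a polytopal subdivision $\Sigma$ of the vertex figure $\Delta_2\times\Delta_{n-4}$ of $\Delta(3,n)$, a matroid subdivision $\widehat\Sigma$ of $\Delta(3,n)$ that induces $\Sigma$, and to set $T(\Sigma):=T(\widehat\Sigma)$. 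For \emph{regular} $\Sigma$ the existence of $\widehat\Sigma$ is exactly Proposition~\ref{prop:K}, so in that case we are already done.

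For arbitrary $\Sigma$ I would instead build the trees directly. Via the Cayley trick \cite{S} I pass to the mixed subdivision $M(\Sigma)$ of $(n-3)\Delta_2$, which by the lozenge-tiling description \cite[Theorem~3.5]{S} is precisely a tropical pseudoline arrangement in the sense of Ardila--Develin \cite{AD}. From this arrangement the $n$ trees are produced by the recipe recalled above: for each boundary line $L_x,L_y,L_z$ a caterpillar whose leaves branch off in the order the pseudolines cross it, and for each interior line $L_i$ a tripod towards $x,y,z$ along whose halfrays the leaf $j$ is inserted at the crossing with $L_j$. Every datum this needs --- which halfray, and in what order --- is carried by $M(\Sigma)$, so the $n$ trees are well defined with no appeal to regularity; and when $\Sigma$ happens to be regular they agree with the trees $T(\widehat\Sigma)$ of the previous paragraph.

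It then remains to check that this $n$-tuple satisfies the recursive definition of an abstract tree arrangement, which I would do by induction on $n$. The crucial point is that deleting a leaf $\ell$ should correspond to erasing the pseudoline $L_\ell$: a pseudoline arrangement with one pseudoline removed is again a pseudoline arrangement, now on $n-1$ lines, so by induction its tree tuple is an abstract arrangement, and one has to verify that erasing $L_\ell$ commutes with deleting the leaf $\ell$ from every $T_i$. For an interior leaf this reduces to the fact that removing $L_\ell$ leaves the relative crossing order of the remaining pseudolines unchanged; for a boundary leaf $\ell\in\{x,y,z\}$ it requires translating ``removing a side of the triangle $(n-3)\Delta_2$'' into the matching deletion, which is the most delicate bookkeeping.

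The heart of the matter, and the step I expect to be hardest, is this deletion-compatibility for non-regular $\Sigma$ --- equivalently, the non-regular analogue of Proposition~\ref{prop:K}. To control it I would exploit the same locality that drives Proposition~\ref{prop:bar_sigma}: whether a subdivision of $\Delta(3,n)$ is a matroid subdivision is decided on its octahedral faces $\cong\Delta(2,4)$, and the corresponding exchange conditions are governed by the three-term Pl\"ucker relations, which are local and blind to regularity. Pushing this locality through the inductive step should close the argument.
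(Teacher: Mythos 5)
Your first two steps are individually sound: for \emph{regular} $\Sigma$ the reduction via Proposition~\ref{prop:K} and Lemma~\ref{lem:arrangement} works, and for \emph{triangulations} $\Sigma$ (fine mixed subdivisions, i.e.\ lozenge tilings) your direct recipe is exactly the paper's construction --- the parallel classes of edges through each upward triangle give the tropical pseudolines of \cite{AD}, and these, subdivided by their crossings and joined by the three boundary lines, give the $n$ trees. The genuine gap is in the only case where the proposition says something new: coarse, possibly non-regular subdivisions. There your recipe is not well defined as stated. In a coarse mixed subdivision of $(n-3)\Delta_2$ the $n-3$ interior trees are carried by at most $n-3$ odd cells, so two pseudolines may share a ``vertex'' cell or overlap rather than cross transversally; then there is no tripod node, crossings are not points, and ``the order in which the $L_j$ cross $L_i$'' has no meaning. (Compare Example~\ref{ex:lozenge}: in the coarser tiling on the right of Figure~\ref{fig:lozenge}, two of the six trees acquire $4$-valent nodes.) The paper closes exactly this gap with a device absent from your proposal: pick \emph{any} triangulation $\Sigma'$ refining $\Sigma$, build $T(\Sigma')$ by the recipe, and then contract edges of those trees in accordance with the coarsening to obtain $T(\Sigma)$, accepting that the outcome may depend on the choice of $\Sigma'$ (which the statement tolerates). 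With that device, no separate inductive verification is needed beyond what the triangulation case provides.

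Your final paragraph, where you locate ``the heart of the matter,'' is a misdirection and is where the argument would fail. A ``non-regular analogue of Proposition~\ref{prop:K}'' --- lifting an arbitrary subdivision of $\Delta_2\times\Delta_{n-4}$ to a (non-regular) matroid subdivision of $\Delta(3,n)$ --- is strictly stronger than what Proposition~\ref{prop:KK} asserts: the proposition only produces an abstract tree arrangement, an object that need not come from any subdivision of a hypersimplex at all (indeed the whole point of this circle of ideas, cf.\ Example~\ref{exa:paco}, is that abstract/non-regular data can fail to be captured by the Dressian). You neither need this lifting statement nor prove it, and Proposition~\ref{prop:bar_sigma} cannot supply it: that proposition compares two matroid subdivisions of $\Delta(3,n)$, one refining the other, which agree on the boundary; it is a uniqueness statement, not an existence or lifting statement, and it says nothing about non-regular subdivisions of the product of simplices. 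So the deletion-compatibility induction that you correctly identify as the crux of your route remains open exactly where the non-regular case begins, whereas the paper's refine-and-contract argument settles that case in two sentences.
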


\begin{proof}
  Assume that $\Sigma$ is a triangulation.  Equivalently, $M(\Sigma)$
  has exactly $n-3$ odd cells, all of which are upward triangles, and
  the even cells are lozenges.  Placing a labeled node into each
  upward triangle defines a tree in the dual graph of $\Sigma$.  Each
  of its three branches consists of the edges in $M(\Sigma)$ which are
  in the same parallel class as one fixed edge of that upward
  triangle.  Two opposite edges in a lozenge are parallel, and the
  \emph{parallelism} that we refer to is the transitive closure of
  this relation.  Each parallel class of edges extends to the boundary
  of the triangle $(n-3)\Delta_2$.  Doing so for all the upward
  triangles, we obtain an arrangement of tropical pseudo-lines
  \cite{AD}.  Each of these is subdivided by the intersection with the
  other tropical pseudo-lines.  We further add the three boundary
  lines of the big triangle to the arrangement.  This specifies an
  abstract tree arrangement $T(\Sigma)$.  Note that the trees in the
  arrangement partition the dual graph of the lozenge tiling.

  \begin{figure}[ht]
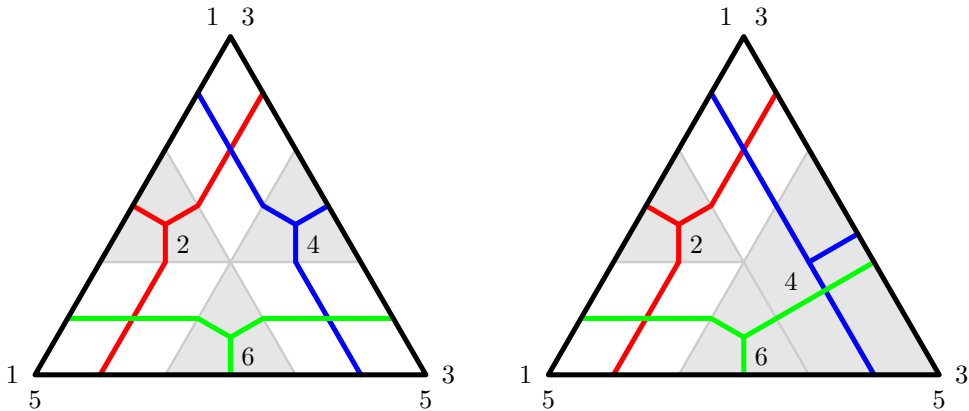
\centering
    \includegraphics[scale=1]{tree-arrangement.1}\qquad
    \includegraphics[scale=1]{tree-arrangement.2}
    \caption{Mixed subdivisions of $3\Delta_2$ and abstract
      arrangements of six trees.}
    \label{fig:lozenge}
  \end{figure}

  Now we consider the situation where $\Sigma$ is not a triangulation,
  so $M(\Sigma)$ is a coarser mixed subdivision of $(n-3)\Delta_2$.
  We shall associate a tree arrangement with $M(\Sigma)$.  Pick any
  triangulation $\Sigma'$ which refines $\Sigma$.  Then by the above
  procedure we have an abstract tree arrangement $T(\Sigma')$ induced
  by $\Sigma'$.  Then, as $\Sigma'$ refines $\Sigma$, one can contract
  edges in the trees of the arrangement $T(\Sigma')$.  In this way one
  also arrives at an abstract arrangement of $n$ trees.  Three of them
  correspond to the boundary lines of $(n-3)\Delta_2$.  The $n-3$
  non-boundary trees are assigned to the $ \leq n-3$ odd cells. Each
  cell is assigned at least one tree.  We note that $T(\Sigma)$ might
  depend on the choice of the triangulation $\Sigma'$.
\end{proof}

\begin{example}\label{ex:lozenge}
  Let $n=6$ and consider the two mixed subdivisions of $3\Delta_2$
  shown in Figure \ref{fig:lozenge}.  The left one is a lozenge tiling
  which encodes a regular triangulation of $\Delta_2\times\Delta_2$,
  here regarded as the vertex figure of $\Delta(3,6)$ at $e_{135}$.
  There are precisely three upward triangles, and each of them
  corresponds to a tree.  Moreover, the three sides of the big
  triangle encode three more trees. Using the notation of Figure
  \ref{fig:tree5}, this abstract tree arrangement equals
  \begin{equation}
    \label{eq:FFFGG}
    34\, 2\, 56 \, , \;\, 34\, 1\, 56 \, , \;\, 12\, 4\, 56 \, , 
    \;\, 12\, 3\,  56 \, , \;\, 12\, 6\, 34 \, , \;\, 12\, 5\, 34 \, .
  \end{equation}

  \begin{figure}[htb]\centering
    \includegraphics[scale=1]{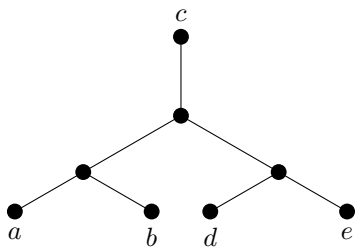}
    \caption{We use the notation $\,ab\, c\, de\,$ for this tree on
      five labeled leaves.}
    \label{fig:tree5}
  \end{figure}

  The tiling of $3\Delta_2$ on the right in Figure~\ref{fig:lozenge}
  is a mixed subdivision which coarsens the lozenge tiling discussed
  above.  It corresponds to the abstract tree arrangement
  \[
  34\, 2\, 56 \, , \; 34\, 1\, 56 \, , \; 12\, (456) \, , \; 12\,
  (356) \, , \; 12\, 6\, 34 \, , \; 12\, 5\, 34 \, .
  \]
  The tree $ab\, (cde)$ is obtained from the tree $ab\, c\, de$ by
  contracting the interior edge between $c$ and the pair $de$.  The
  odd polygonal cells (shaded in Figure~\ref{fig:lozenge}) correspond
  to trees. \qed
\end{example}

\begin{example} \label{exa:paco} An example of a non-regular matroid
  subdivision arises from the lozenge tiling of $6 \Delta_2$ borrowed
  from \cite{S} and shown in Figure~\ref{fig:paco}.  This picture
  translates into the abstract arrangement of nine trees in
  Table~\ref{tab:paco}.  The corresponding matroid subdivision of
  $\Delta(3,9) $ is not regular.  Equivalently, the Dressian
  $\Dr(3,9)$ has no cell for this tree arrangement.  \qed
\end{example}

\begin{table}[ht]\centering
  \caption{Abstract arrangement of nine caterpillar trees on eight
    leaves encoding a matroid subdivision of $\Delta(3,9)$ which is
    not regular; see Figure~\ref{fig:paco}.  The notation for
    caterpillar trees is explained in Figure~\ref{fig:trees6} below.}
  \renewcommand{\arraystretch}{0.9}
  \begin{tabular*}{.9\linewidth}{@{\extracolsep{\fill}}lll@{}}
    \toprule
    Tree 1: C$(24,6598,37)$ & Tree 2: C$(14,5768,39)$ & Tree 3: C$(17,5846,29)$ \\
    Tree 4: C$(12,6579,38)$ & Tree 5: C$(26,4198,37)$ & Tree 6: C$(14,5729,38)$ \\
    Tree 7: C$(13,5894,26)$ & Tree 8: C$(15,7346,29)$ & Tree 9: C$(15,7468,23)$ \\
    \bottomrule
  \end{tabular*}
  \label{tab:paco}
\end{table}

\begin{remark} 
  There are $187$ lozenge tilings of $4 \Delta_2$, each representing
  $24$ triangulations of $\Delta_3 \times \Delta_2$ via the $4! = 24$
  ways of labeling the upward triangles.  Each lozenge tiling defines
  an arrangement of seven trees that indexes a maximal cell of
  $\Dr(3,7)$.  In other words, the polytopal $5$-sphere dual to the
  secondary polytope of $\Delta_2 \times \Delta_3$ has $4488=187\cdot
  24$ facets, and embeds as a subcomplex into $\Dr(3,7)$.  It is
  instructive to study this subcomplex by browsing our website for
  $\Dr(3,7)$. For example, the tropical plane of type 89 on our
  website corresponds to Figure~4 in \cite{AD}.
\end{remark}

\begin{remark} 
  Another important sphere sitting inside the Grassmannian $\Gr(d,n)$,
  and hence in the Dressian $\Dr(d,n)$, is the \emph{positive
    Grassmannian} $\Gr^+(d,n)$, due to Speyer and Williams \cite{SW}.
  A natural next step would be to introduce and study the
  \emph{positive Dressian} $\Dr^+(d,n)$.  Generalizing \cite[\S
  5]{SW}, the positive Dressian $\Dr^+(3,n)$ would parameterize
  \emph{metric arrangements of planar trees}.  This space contains the
  $(2n-9)$-dimensional sphere $\Gr^+(3,n)$.  It would be interesting
  to know whether this inclusion is a homotopy equivalence, to explore
  relations with cluster algebras, and to extend the computation of
  $\Gr^+(3,7)$ presented in \cite{SW}. Incidentally, there is a
  misprint in the right part of \cite[Table 2]{SW}: the eleventh
  inequality should be ``$-x_5\leq -14$'' instead of ``$-17$''. With
  this correction, we independently verified the $f$-vector and the
  rays of its normal fan $F_{3,7}$.  \qed
\end{remark}

\section{Tropical Planes}
\label{sec:pictures}

We are now finally prepared to answer the question raised in the title
of this paper.  Tropical planes are contractible polyhedral surfaces
that are dual to the regular matroid subdivisions of $\Delta(3,n)$.
Consider any vector $p$ in $\R^{\binom{n}{3}}$ that lies in the
Dressian $\Dr(3,n)$.  The associated tropical plane $L_p$ in
$\TP^{n-1}$ is the intersection of the tropical hyperplanes
\[
\tropical{\,
p_{ijk}x_l + p_{ijl} x_k + p_{ikl} x_j + p_{jkl}x_i\,}
\]
as $\{i,j,k,l\}$ ranges over all $4$-element subsets of $[n]$.  By a
\emph{tropical plane} we mean any subset of $\TP^{n-1}$ which has the
form $L_p$ for some $p \in \Dr(3,n)$. The tropical plane $L_p$ is
realizable as the tropicalization of a classical plane in
$\PP_\K^{n-1}$ if and only if $p \in \Gr(3,n)$.  The plane $L_p$ is
called \emph{series-parallel} if each cell in the corresponding matroid
subdivision of $\Delta(3,n)$ is the graphic matroid of a
series-parallel graph.  Results of Speyer \cite{Spe1, Spe2} imply:

\begin{prop}
  Let $L$ be a tropical plane in $\TP^{n-1}$ with $f_0(L)$ vertices,
  $f_1^{b}(L)$ bounded edges, $f_1^{u}(L)$ unbounded edges,
  $f_2^{b}(L)$ bounded $2$-cells and $f_2^{u}(L)$ unbounded $2$-cells.
  Then
  \[
  \begin{matrix}
    f_0(L)       & \!\! \leq \!\! & (n-2)(n-3)/2, &
    f_1^{b} (L)  & \!\! \leq \!\! & (n-4)(n-3),\, &
    f_1^{u} (L)  & \!\! \leq \!\! & n(n-3), \\ &&&
    f_2^{b} (L)  & \!\! \leq \!\! & (n-4)(n-5)/2, &
    f_2^{u} (L)  & \!\! \leq \!\! & 3n(n-1)/2 \,.
  \end{matrix}
  \]
  These five inequalities are equalities if 
and only if $L$ is a series-parallel
  plane.
\end{prop}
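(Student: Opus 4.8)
The plan is to translate all five face numbers into data of the dual matroid subdivision and then to invoke Speyer's $f$-vector theorem. By Proposition~\ref{prop:Speyer} the combinatorial type of $L_p$ is governed by the matroid subdivision $\Sigma=\Sigma(p)$ of $\Delta(3,n)$, and $L_p$ is dual to $\Sigma$ in the usual inclusion-reversing fashion: a $j$-dimensional face of $L_p$ corresponds to a codimension-$j$ cell of $\Sigma$, with bounded faces corresponding to the interior cells and unbounded faces to the cells lying in $\partial\Delta(3,n)$. Thus $f_0(L)$ counts the maximal cells of $\Sigma$ (its spread), $f_1^b$ and $f_1^u$ count the interior and the boundary ridges, and $f_2^b$ and $f_2^u$ count the interior and the boundary codimension-$2$ cells. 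I would first pin down this dictionary on the boundary: each contraction facet is isomorphic to $\Delta(2,n-1)$ and, by Lemma~\ref{lem:arrangement}, carries the tree $T_i$ of the arrangement $T(\Sigma)$, so the boundary behaviour, and hence the unbounded faces of $L_p$, is organized by the $n$ trees $T_1,\dots,T_n$ together with the recession structure of $L_p$.

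With the dictionary in place, the bounds follow from Speyer's work. The inequality $f_0\le\binom{n-2}{2}$ is exactly the spread bound $\binom{n-2}{d-1}$ of \cite[Theorem~6.1]{Spe1} specialized to $d=3$, already used in the proof of Theorem~\ref{thm:dim_bound}. For the remaining faces I would quote the full $f$-vector theorem for tropical linear spaces from \cite{Spe1,Spe2}: every face number of a tropical linear space is bounded above by its value on a generic space, and all of these bounds are attained simultaneously if and only if every cell of $\Sigma$ is the polytope of a series-parallel matroid, which is precisely the definition of $L$ being series-parallel given in Section~\ref{sec:pictures}. On the boundary this specializes to the statement that the trees $T_i$ carry the maximal number of vertices and edges exactly when they are trivalent, the $d=2$ shadow of the series-parallel condition.

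It then remains to evaluate the generic values and to check that they simplify to the stated right-hand sides; this is routine binomial arithmetic once Speyer's formulas are specialized to $d=3$. A useful internal check is that the bounded part of a tropical plane is contractible, so its Euler characteristic equals $1$: indeed $f_0-f_1^b+f_2^b=\binom{n-2}{2}-(n-3)(n-4)+\binom{n-4}{2}=1$, which confirms the three bounded numbers, while the two unbounded numbers are pinned down analogously through the recession complex, i.e.\ the link of $L_p$ at infinity. The main obstacle is not the arithmetic but the equality clause: one must know that a single combinatorial condition controls all five extrema at once, i.e.\ that no matroid subdivision can make some face numbers maximal without being series-parallel and hence making all of them maximal. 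This is exactly the content of the equality characterization in Speyer's $f$-vector theorem, and it is the one ingredient that genuinely has to be imported from \cite{Spe1,Spe2}.
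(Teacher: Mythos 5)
Your proposal is correct and follows essentially the same route as the paper: the paper offers no independent proof either, but states the proposition as a direct consequence of Speyer's results \cite{Spe1,Spe2}, adding only the same dictionary you describe --- vertices and bounded faces dual to interior cells of the matroid subdivision, unbounded edges and $2$-cells read off from the nodes and edges of the $n$ trees cut out on the contraction facets. One caveat your ``routine arithmetic'' step would have exposed: the generic value of $f_2^{u}$ is $n(n-4)+n(n-1)/2=3n(n-3)/2$ (equal to $27$ for $n=6$, as the paper itself notes), so the bound $3n(n-1)/2$ in the statement is a typo, and the equality assertion holds only for the corrected value.
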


The unbounded edges and $2$-cells of a tropical plane correspond to
the nodes and edges of the $n$ trees in the corresponding tree
arrangement. Suppose the trees are trivalent. Then each tree has $n-1$
leaves and $n-3$ nodes, for a total of $f_1^{u} (L) = n(n-3)$ nodes.
Moreover, each tree has $n-4$ interior edges and $n-1$ pendent edges.
The latter are double-counted.  This explains the number $f_2^u(L) =
n(n-4) + n(n-1)/2$ of edges in the tree arrangement representing $L$.
To understand this situation geometrically, we identify $\TP^{n-1}$
with an $(n-1)$-simplex, and we note that the tree arrangement is
obtained geometrically as the intersection
$\,L\cap\partial\TP^{n-1}\,$ of $L$ with the boundary of that simplex.

The \textbf{first answer} to our question of how to draw a tropical
plane is given by Theorem~\ref{thm:arrangement}: simply \textbf{draw
  the corresponding tree arrangement}.  This answer has the following
interpretation as an algorithm for enumerating all tropical planes.
To draw all (generic) planes $L$ in $\TP^{n-1}$, we first list all
trees on $n-1$ labeled leaves.  Each labeled tree occurs in $n$
relabelings corresponding to the sets
$[n]\backslash\{1\},[n]\backslash\{2\},\dots,[n]\backslash\{n\}$ of
labels.  Inductively, one enumerates all arrangements of $4,5,\dots,n$
trees.  This naive approach works well for $n\le 6$. The result of the
enumeration is that, up to relabeling and restricting to trivalent
trees, there are precisely seven abstract tree arrangements for $n=6$.
They are listed in Table~\ref{tab:trees}.  Each tree is written as
$ab\,c\, de$, the notation introduced in Figure~\ref{fig:tree5}.  We
then check that each of the seven abstract tree arrangements supports
a metric tree arrangement, and we conclude that $\Dr(3,6)$ has seven
maximal cells modulo the natural action of the group $\Sym_6$.  The
names for the seven types of generic planes are the same as in
\cite[\S 5]{SS} and in Figure \ref{fig:planes}.

\begin{table}[hbt]\centering
  \caption{The trees corresponding to the seven types of tropical planes in $\TP^5$.}
  \renewcommand{\arraystretch}{0.9}
  \begin{tabular*}{.9\linewidth}{@{\extracolsep{\fill}}lccccccr@{}}
    \toprule
    Type&Tree 1&Tree 2&Tree 3&Tree 4&Tree 5&Tree 6&Orbit Size\\
    \midrule
    EEEE&23 6 45&13 5 46&12 4 56&15 3 26&14 2 36&24 1 35&30\\
    EEEG&26 5 34&16 5 34&14 2 56&13 2 56&12 3 46&12 3 45&240\\
    EEFF(a)&25 6 34&15 6 34&12 5 46&12 5 36&12 6 34&12 5 34&90\\
    EEFF(b)&25 6 34&15 6 34&12 6 45&12 6 35&12 6 34&12 5 34&90\\
    EEFG&25 6 34&15 6 34& 24 1 56&23 1 56&12 6 34&12 5 34&360\\
    EFFG&34 2 56&34 1 56&12 6 45&12 6 35&12 6 34&12 5 34&180\\
    FFFGG&34 2 56&34 1 56&12 4 56&12 3 56&12 6 34&12 5 34&15\\
    \bottomrule
  \end{tabular*}
  \label{tab:trees}
\end{table}

It is easy to translate the seven rows in Table~\ref{tab:trees} into
seven pictures of tree arrangements. For example, the representative
for type ${\rm FFFGG}$ in the last row coincides with (\ref{eq:FFFGG})
and its picture appears on the left side in Figure \ref{fig:lozenge}.
It can be checked in the pictures that each of the seven tree
arrangements has $f_1^{u} (L) = 18$ nodes and $f_2^{u} (L) = 27$
edges.

The \textbf{second answer} to our question of how to draw a tropical
plane is given by Figure~\ref{fig:planes}: simply \textbf{draw and
  label the bounded cells}.  The planes $L$ in the last six rows of
Table~\ref{tab:trees} are series-parallel. Here, the complex of
bounded cells in $L$ has $f_0(L) = 6$ nodes, $f_1^b(L) = 6$ edges and
$f_2^b(L) = 1$ two-dimensional cell.  The first type ${\rm EEEE}$ is
not series-parallel: its bounded complex is one-dimensional,
with four edges and five nodes.

Each node of (the complex of bounded cells of) a tropical plane $L$ is
labeled by a connected rank-$3$-matroid. This is the matroid whose
matroid polytope is dual to that node in the matroid subdivision of
$\Delta(3,n)$ given by $L$.  For $n=6$ only three classes of matroids
occur as node labels of generic planes.  These matroids are denoted
$\{A,B,C,D\}$, $[A,B,C,D](E)$, or $\langle A;a;(b,c,d,e)\rangle$.
Here capital letters are non-empty subsets of and lower-case letters
are elements of the set $\{1,2,3,4,5,6\}$. All three matroids are
graphical.  The corresponding graphs are shown in
Figure~\ref{fig:graphs}.  Note that an edge labeled with a set of $l$
points should be considered as $l$ parallel edges each labeled with
one element of the set.

\begin{figure}[htb]
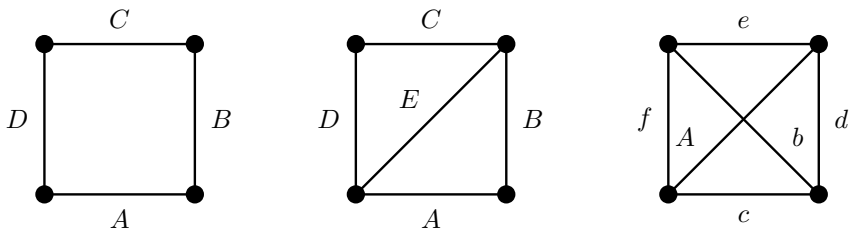
\centering
  \begin{minipage}[c]{.25\textwidth}\centering
    \includegraphics[scale=1]{graphs.0}
  \end{minipage}
  \begin{minipage}[c]{.25\textwidth}\centering
    \includegraphics[scale=1]{graphs.1}
  \end{minipage}
  \begin{minipage}[c]{.25\textwidth}\centering
    \includegraphics[scale=1]{graphs.2}
  \end{minipage}
  \caption{The graphic matroids corresponding to the labels
    $\{A,B,C,D\}$, $[A,B;C,D](E)$ and $\langle A;b;(c,d,e,f)\rangle$
    used for the nodes in Figure \ref{fig:planes}.}
  \label{fig:graphs}
\end{figure}

The underlying graph of the matroid $\langle A;b;(c,d,e,f)\rangle$ is
the complete graph $K_4$.  The set $A$ is a singleton, and thus its
automorphism group is the full symmetric group $\Sym_4$ of order 24
acting on the four nodes of $K_4$.  This matroid occurs in the unique
orbit of planes (of type EEEE) in $\TP^6$ whose bounded parts are not
two-dimensional. The series-parallel planes use only the matroids
$\{A,B,C,D\}$ and   $[A,B;C,D](E)$ for their labels.

The \textbf{third answer} to our question is the synthesis of the
previous two: \textbf{draw both} the bounded complex and the tree
arrangement.  The two pictures can be connected, by linking each node
of $L$ to the adjacent unbounded rays and $2$-cells. This leads to an
accurate diagram of the tropical plane $L$. The reader might enjoy
drawing these connections between the seven rows of
Table~\ref{tab:trees} and the seven pictures in
Figure~\ref{fig:planes}.

The analogous complete description for $n=7$ is a main contribution of
this paper. Based on the computational results in Section 2, we
prepared an online census of $\Gr(3,7)$ and $\Dr(3,7)$, with a picture
for each bounded complex. This is posted at our website
\begin{quote}\small
  \url{www.uni-math.gwdg.de/jensen/Research/G3_7/grassmann3_7.html}.
\end{quote}

\begin{figure}[htb]
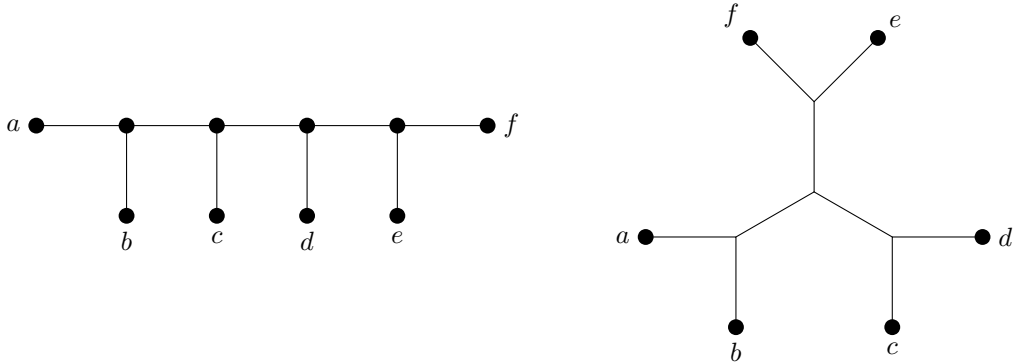
\centering
  \begin{minipage}[c]{.45\textwidth}\centering
    \includegraphics[scale=1]{tree.2}
  \end{minipage}
  \begin{minipage}[c]{.45\textwidth}\centering
    \includegraphics[scale=1]{tree.3}
  \end{minipage}
  \caption{Caterpillar tree C$(ab,cd,ef)$ and snowflake tree S$(ab,cd,ef)$.}
  \label{fig:trees6}
\end{figure}

The maximal cells of the Dressian $\Dr(3,7)$ correspond to
arrangements of seven trivalent trees. As part of our computations, we
found that for $n=7$ there is no difference between abstract tree
arrangements and metric tree arrangements: nothing like Example
\ref{exa:paco} exists in this case.  To draw the tree arrangements, we
note that there are two distinct trivalent trees on six leaves.  These
are the \emph{caterpillar} and the \emph{snowflake} trees depicted in
Figure~\ref{fig:trees6}.  Caterpillar trees exist for all $n\ge 5$,
and are encoded using a natural generalization of the notation in
Figure~\ref{fig:tree5}.  Note, for instance, the caterpillars with
eight leaves in Table~\ref{tab:paco}.

We conclude with a brief discussion of the $94$ generic planes
depicted on our website.  Four types of node labels occur in the
Dressian $\Dr(3,7)$.  First of all, the matroids $\{A,B,C,D\}$,
$[A,B,C,D](E)$, and $\langle A;b;(c,d,e,f)\rangle$ appear again.  Here
capital letters are non-empty subsets of and lower-case letters are
elements of $\{1,2,\ldots,7\}$.  The other matroid which occurs is the
Fano matroid $\cF_3$ arising from the projective plane $\PG_2(2)$; see
Figure~\ref{fig:non-fano} (left).  It corresponds to the
six-dimensional cells of $\Dr(3,7)$ generated by seven vertex splits.
Each such $6$-cell admits seven coarsenings arising from omitting one
of the seven splits.  These coarsenings correspond to the non-Fano
matroid; see Figure~\ref{fig:non-fano} (right).

\section{Restricting to Pappus}
\label{sec:pappus}

The Grassmannian $\Gr(d,n)$ is a tropical variety and the Dressian
$\Dr(d,n)$ is a tropical prevariety.  We now consider these two fans
inside the tropical projective space $\TP^{\binom{n}{d}-1}$.  That
projective space is a simplex, and it makes sense to study their
intersections with each (relatively open) face of
$\TP^{\binom{n}{d}-1}$. That intersection is non-empty only if the
face corresponds to a matroid $\cM$ of rank $d$ on $[n]$. This leads
to the following relative versions of our earlier definitions.

We define the
\emph{Grassmannian} $\Gr(\cM)$ \emph{of a matroid} $\cM$ 
to be the tropical variety
defined by the ideal $I_\cM$ which is obtained from the Pl\"ucker
ideal by setting to zero all variables $p_X$ where $X$ is not a basis of $\cM$.  We define the \emph{Dressian} $\Dr(\cM)$ to be
the tropical prevariety given
by the set of quadrics which are obtained from the quadratic Pl\"ucker
relations by setting to zero all variables $p_X$ where $X$ is not a basis of $\cM$.  Equivalently, in the language of \cite{DW1,DW2}, the
Dressian $\Dr(\cM)$ is the set of all real-valued valuations of the
matroid $\cM$.  As before, $\Gr(\cM)$ is a subfan of the Gr\"obner fan
of $I_\cM$, the Dressian $\Dr(\cM)$ is a subfan of the secondary fan
of the matroid polytope of $\cM$, and we regard these fans as polyhedral
complexes after removing the lineality space and intersecting with a sphere.
Note that the cells of
$\Dr(\cM)$ are in bijection with the regular matroid subdivisions of
the matroid polytope of $\cM$.  The Grassmannian $\Gr(d,n)$ and the
Dressian $\Dr(d,n)$ discussed in the previous sections are special
cases where $\cM$ is the uniform matroid of rank $d$ on $n$ elements.
The Dressian $\Dr(d,n)$ contains the Dressians of all matroids of rank
$d$ on $n$ elements as subcomplexes at infinity.

In this final section we examine these concepts in detail for one
important example, namely, we take $\cM$ to be the \emph{Pappus
  matroid}. Here $d=3$, $n=9$, $\cM$ has 75 bases, and the non-bases
are the nine lines in the Pappus configuration shown in
Figure~\ref{fig:pappus}:
\[
123, \ 148, \ 159, \ 247, \ 269, \ 357, \ 368, \ 456, \ 789 \, .
\]
The ideal $I_\cM$ is the ideal in the polynomial ring in 75 variables
obtained from the Pl\"ucker ideal by setting the corresponding
nine Pl\"ucker coordinates to zero: $p_{123} = \cdots = p_{789} = 0$.

\begin{figure}[htb]
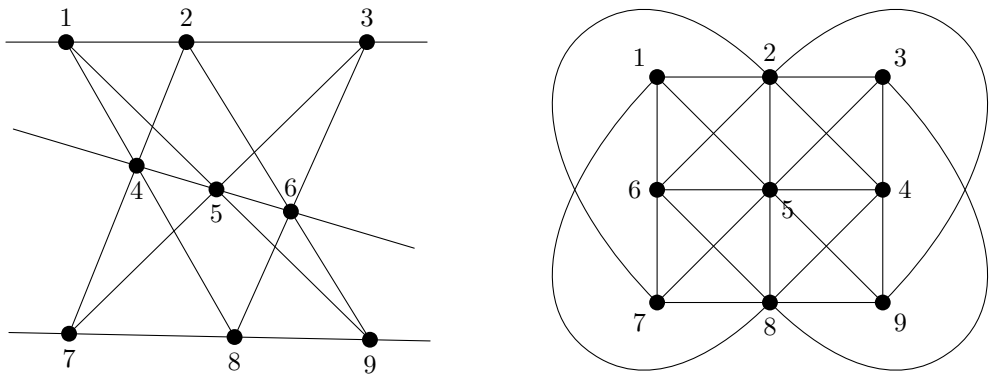

  \begin{minipage}[c]{.45\textwidth}\centering
    \includegraphics[scale=1]{pappus.0}
  \end{minipage}
  \begin{minipage}[c]{.45\textwidth}\centering
    \includegraphics[scale=1]{hessian.0}
  \end{minipage}
  \caption{Pappus configuration (left) and Hessian configuration (right).}
  \label{fig:pappus}
\end{figure}

The realization space of the Pappus configuration modulo projective
transformations is two-dimensional, and the Grassmannian $\Gr(\cM)$ is
the corresponding tropical surface. We shall determine the underlying
graph and how it embeds into the Dressian $\Dr(\cM)$.

\begin{prop} The Grassmannian  $\Gr(\cM)$ of the Pappus matroid 
$\cM$ is a graph with $19$ nodes and $30$ edges. One of
the nodes gets replaced by a triangle in the
Dressian  $\Dr(\cM)$.
The Dressian $\Dr(\cM)$ is a simplicial complex with $18$ vertices, $30$
edges and one triangle. 
\end{prop}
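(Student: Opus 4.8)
The plan is to compute the two complexes separately and then to compare them by means of the inclusion $\Gr(\cM)\subseteq\Dr(\cM)$. The Dressian is the purely combinatorial half. By the discussion after Proposition~\ref{prop:Speyer}, the cells of $\Dr(\cM)$ are the regular matroid subdivisions of $\MatroidPolytope{\cM}$, the vertices are the coarsest such subdivisions, and a $k$-cell is spanned by $k+1$ coarsest subdivisions that are pairwise compatible and whose common refinement is again a matroid subdivision. First I would enumerate the splits of $\MatroidPolytope{\cM}$, the subdivisions of spread~$2$: each is cut out by one hyperplane $\sum_{i\in A}x_i=c$ that separates $\MatroidPolytope{\cM}$ into two matroid polytopes. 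For the Pappus matroid these hyperplanes come from the flats of $\cM$, and a direct enumeration gives exactly $18$ splits, which are the $18$ vertices. One then has to confirm that every coarsest matroid subdivision is actually a split; since the general identification of split complex and Dressian (Proposition~\ref{prop:HJ}) is not available for $d=3$, $n=9$, this requires a direct \polymake verification of the rays of $\Dr(\cM)$.

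I would then read off the face structure from the splits. Two splits are compatible precisely when their defining hyperplanes do not meet in the relative interior of $\MatroidPolytope{\cM}$, and listing the compatible pairs yields the $30$ edges. The decisive combinatorial point is that, among all pairwise compatible triples, exactly one has the stronger property that the common refinement of all three splits is still a matroid subdivision; this triple spans the unique triangle, while no four splits are simultaneously compatible in this sense, so $\Dr(\cM)$ has no cell of dimension $\ge 3$. This establishes the $f$-vector $(18,30,1)$.

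For the Grassmannian I would use that the realization space of $\cM$ modulo projective transformations is a surface, so that $\Gr(\cM)$ is the corresponding tropical surface and, as a spherical complex, a one-dimensional graph contained in $\Dr(\cM)$. Every single split, and every compatible pair of splits other than the three edges of the triangle, lifts to an honest Pappus configuration over a field with a valuation, hence lies in $\Gr(\cM)$; this accounts for all $18$ vertices and for $30-3=27$ of the edges. What remains is to determine how $\Gr(\cM)$ meets the distinguished triangle, and here Pappus's theorem becomes decisive.

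The hard part is to show that $\Gr(\cM)$ meets the closed triangle not along its boundary but in an inscribed tripod. Writing $A$, $B$, $C$ for the three corner splits, the interior of the triangle parameterizes the valuated matroids obtained by activating all three splits at once, a two-parameter family. Realizability within this family is cut out by one further condition, the tropicalization of the ninth Pappus collinearity expressed through the Pl\"ucker coordinates of $\cM$. This tropical relation is a plane tropical curve with a triangular Newton polygon, so its locus inside the two-cell is a single trivalent node $O$ with three edges running to the corners $A$, $B$, $C$; in particular the relative interiors of the boundary edges $AB$, $BC$, $CA$ are not realizable. Thus the triangle of $\Dr(\cM)$ is replaced in $\Gr(\cM)$ by this tripod, which contributes the one new node $O$ and trades the three boundary edges for the three legs $OA$, $OB$, $OC$. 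Combined with the $18$ common vertices and $27$ common edges, this gives a graph with $19$ nodes and $30$ edges, as asserted. I would confirm the tropical Pappus relation and the trivalence of $O$ by an explicit \Gfan computation of $\Gr(\cM)=\tropical{I_\cM}$, matched cell by cell against the combinatorial model of $\Dr(\cM)$.
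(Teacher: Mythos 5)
Your computation of $\Dr(\cM)$ rests on a false premise, and this is a genuine gap. You assert that a direct enumeration yields exactly $18$ splits of $\MatroidPolytope{\cM}$, that these are the $18$ vertices, and that the $30$ edges are the compatible pairs of splits. In fact the Pappus matroid polytope has exactly \emph{three} splits, namely the vertex splits at the bases $167$, $258$ and $349$; this is stated explicitly in the paper. The remaining $15$ rays of $\Dr(\cM)$ are coarsest matroid subdivisions that are \emph{not} splits: six ``Graves nodes,'' one for each Graves triad (a partition of the nine points into three bases whose two-element subsets span three-point lines), whose subdivisions have three maximal cells, and nine ``connector nodes,'' whose subdivisions have seven maximal cells. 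So $\Dr(\cM)$ is very far from being a split complex here --- consistent with Proposition~\ref{prop:HJ}, which gives equality of the split complex and the Dressian only for $d=2$ or $d=n-2$, while the Pappus matroid has $d=3$, $n=9$. The verification step you yourself flag (``confirm that every coarsest matroid subdivision is actually a split'') is precisely the step that fails, and with it your derivation of the edge count and of the unique triangle collapses: the $27$ non-triangle edges of $\Dr(\cM)$ each join a split node to a connector node or a connector node to a Graves node, so they cannot be described as compatible pairs of splits at all, and the compatibility calculus you propose never produces them. The paper instead enumerates the coarsest matroid subdivisions directly (three splits, six Graves subdivisions, nine connector subdivisions, with explicit lists of the matroids occurring as maximal cells) and reads off the graph structure: the Graves and connector nodes form the vertices and subdivided edges of a $K_{3,3}$, each connector node is additionally attached to one split node, and the three split nodes are joined to the core.

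What you get right is the local analysis at the core triangle, and there your argument is essentially the paper's: the two-cell of $\Dr(\cM)$ is spanned by the three vertex splits, and the Grassmannian meets that closed triangle in an inscribed tripod, because a trinomial in $I_\cM$ whose three terms carry $p_{167}$, $p_{258}$, $p_{349}$ forces the minimum of the three corresponding linear forms to be attained twice; hence the relative interiors of the boundary edges are unrealizable, the core node is the extra $19$th vertex of $\Gr(\cM)$, and the three legs replace the three boundary edges. This matches the paper's ``algebraic witness'' and its conclusion that, locally on the core triangle, $\Gr(\cM)$ looks like a tropical line. But to turn your proposal into a proof you must first replace the split-complex model of $\Dr(\cM)$ by an enumeration of all coarsest matroid subdivisions of $\MatroidPolytope{\cM}$ and their refinement relations.
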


\begin{proof}
  What follows is a detailed description, first of $\Gr(\cM)$ and
  later of $\Dr(\cM)$.  The Grassmannian $\Gr(\cM)$ has three
  \emph{split nodes}, represented by the bases $167$, $258$ and $349$
  of the Pappus matroid $\cM$.  These three bases are characterized by
  the property that their two-element subsets form two-point lines.
  The corresponding matroid subdivisions are vertex splits, and they
  are the only splits of the matroid polytope $\MatroidPolytope{\cM}$.
  The three split vertices have valence four, and they are connected
  to a special trivalent \emph{core node} $C$.

  \begin{figure}[ht]\centering
    \includegraphics[width=.5\textwidth]{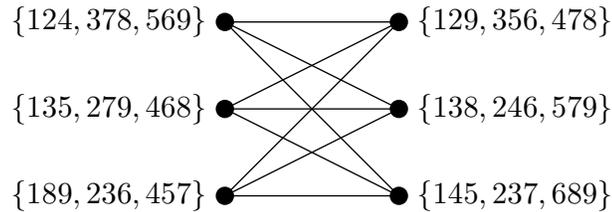}
    \caption{Complete bipartite graph formed from the Graves triads.}
    \label{fig:graves}
  \end{figure}

  The remaining 15 nodes are all trivalent in $\Gr(\cM)$, and their subgraph
  corresponds to the vertices and edges of the complete bipartite graph
  $K_{3,3}$. The six vertices of $K_{3,3}$ correspond to six \emph{Graves
    nodes}, one for each of the Graves triads in the Pappus configuration. A
  \emph{Graves triad} is a partition of the nine points into three bases whose
  two-element subsets span three-point lines.  Each Graves node defines a
  matroid subdivision with three maximal cells. The three corresponding
  matroids have $52$ bases, and they are obtained geometrically by merging
  together the three points in a triple of the Graves triad. For example, the
  first matroid in the subdivision defined by the Graves triad $\{145, 237,
  689\}$ is obtained from the Pappus matroid by making 1, 4 and 5 parallel
  elements.

  The six Graves triads form the vertices of the graph $K_{3,3}$ shown
  in Figure~\ref{fig:graves}.  On each of the nine edges lies a
  \emph{connector node} of $\Gr(\cM)$, which is between two Graves
  nodes and also adjacent to one of the three split nodes.  Each
  connector node defines a matroid subdivision with seven maximal
  cells.  The number of bases of these seven matroids are $36$, $36$,
  $36$, $40$, $40$, $40$, $51$.  For a concrete example consider the
  two adjacent Graves triads $\{145, 237, 689\}$ and $\{189, 236,
  457\}$. On the edge between them in $K_{3,3}$ we find a connector
  node which is also adjacent to the split node $167$. The seven
  matroids in the matroid subdivision of $\MatroidPolytope{\cM}$
  represented by that connector node are the rows in the following
  table:

  \begin{table}[ht]\centering
    \begin{tabular*}{.5\linewidth}{@{\extracolsep{\fill}}cc@{}}
      \toprule
      number of bases  &   parallelism classes \\
      \midrule
      51          &   \{8, 9\}, \{2, 3\}, \{4, 5\}\\
      40          &        \{2, 3, 6, 7\}\\
      40          &        \{1, 4, 5, 7\}\\
      40          &        \{1, 6, 8, 9\}\\
      36          &   \{4, 5, 7\}, \{6, 8, 9\}\\
      36          &   \{1, 4, 5\}, \{2, 3, 6\}\\
      36          &   \{1, 8, 9\}, \{2, 3, 7\}\\
      \bottomrule
    \end{tabular*}
  \end{table}
  
  We now come to the Dressian $\Dr(\cM)$ of the Pappus matroid $\cM$.
  This is a non-pure complex whose facets are one triangle and $27$
  edges.  It is obtained from $\Gr(\cM)$ by removing the core
  node and replacing it with the \emph{core triangle} whose nodes are
  the split nodes $167$, $258$ and $349$.  Thus $\Dr(\cM)$ has $18$
  vertices, $30$ edges and one triangle. The core triangle of $\Dr(\cM)$
  represents the matroid subdivision which is obtained from the Pappus
  matroid polytope by slicing off the three vertices $167$, $258$ and
  $349$.  What remains is the matroid polytope of the Hessian
  configuration shown in Figure~\ref{fig:pappus}. This is the matroid
  associated with the affine plane over the field $\GF(3)$ with three
  elements.  Collinearity of any eleven of its twelve triples implies
  collinearity of the last. It is this incidence theorem which
  explains the difference between $\Gr(\cM)$ and $\Dr(\cM)$. An
  algebraic witness is offered by the expression
  \[
  p_{289} p_{389} p_{489} p_{569} p_{589} \underline{p_{167}}
  - p_{189} p_{389} p_{489} p_{569} p_{679} \underline{p_{258}}
  + p_{189} p_{289} p_{569} p_{589} p_{678} \underline{p_{349}}\, .
  \]
  This trinomial lies in the Pappus ideal $I_\cM$, and it shows that
  the tropical variety of $I_\cM$ does not contain the entire
  triangular cone spanned by the basis vectors $e_{167}, e_{258},
  e_{349}$. As the minimum must be attained at least twice, we
  conclude that, locally on the core triangle of the Dressian
  $\Dr(\cM)$, the Grassmannian $\Gr(\cM)$ looks like a tropical line.
\end{proof}

\bigskip

\end{document}